\theoremstyle{plain}
\newtheorem{thm}{Theorem}[section]
\newtheorem{cor}[thm]{Corollary}
\newtheorem{lemma}[thm]{Lemma}
\newtheorem{prop}[thm]{Proposition}
\theoremstyle{remark}
\numberwithin{equation}{section}
\newcommand\topalign[1]{%
	\setbox0\hbox{#1}%
	\raisebox{\dimexpr-\ht0+\dp0\relax}{\usebox0}}
\title{A note on Diagonal sequences of integer partitions}
\author{Michael Neubauer \orcidlink{0000-0001-7514-7566}}
\author{Harmony Vargas \orcidlink{0009-0002-7364-5216} }
\email{michael.neubauer@csun.edu}
\email{hamrmony.vargas.73@my.csun.edu}
\address{Department of Mathematics, California State University, Northridge, CA 91330, USA}
\date{October 22, 2024}
\subjclass[2020]{Primary: 11P81, 11P84; Secondary: 05A17}
\keywords{integer partitions, diagonal sequence}
\thanks{}
\begin{document}
	
	\setlength{\parindent}{0in} \setlength{\parskip}{.2in} 
	
	\begin{abstract}
		Let \(\mathcal{P}(n)\) be the set of partitions of the positive integer \(n\). For \(\alpha=(\alpha_1,...,\alpha_t) \in \mathcal{P}(n)\) define the diagonal sequence \(\delta(\alpha)=(d_k(\alpha))_{k \geq 1}\) via \( d_k(\alpha) = \big\lvert \{ i \, \rvert \, 1 \leq i \leq k \mbox{ and  } \alpha_i + i- 1\geq k \}  \big\rvert.\) We show that the set of all partitions in \(\mathcal{P}(n)\) with the same diagonal sequence is a partially ordered set under majorization with unique maximal and minimal elements and we give an explicit formula for the number of partitions with the same diagonal sequence.   
	\end{abstract}
	\maketitle

	\section{Introduction}

	Let \(\mathcal{ P}(n)\) be the set of partitions of the natural number \(n\), i.e., for \(\alpha=(\alpha_1,...,\alpha_l) \in \mathcal{ P}(n)\) we assume \(\alpha_1 \geq \alpha_2 \geq \cdots \geq \alpha_l \geq 1\), \(\alpha_i \in \mathbb{N}\), and \(\sum_{i=1}^l \alpha_i=n\). Let \(\mathcal{P}(n,k)\) be the partitions of \(n\) with exactly \(k\) non-zero parts and let \(\mathcal{P}(n,k)^*\) be the set of conjugates of \(\mathcal{P}(n,k)\), i.e.,  \(\mathcal{P}(n,k)^*\) is the set of all partitions of \(n\) that have largest part equal to \(k\)..   
	
	\textbf{Definition} 
	\textit{For a partition \(\alpha= (\alpha_1, \cdots,\alpha_l) \in \mathcal{ P}(n)\)  define its diagonal sequence \(\delta(\alpha)=(d_k)_{k \geq 1}\) via
		\begin{equation}
			d_k=d_k(\alpha) = \big\lvert \{ i \, \rvert \, 1 \leq i \leq k \mbox{ and  } \alpha_i + i- 1\geq k \}  \big\rvert.
	\end{equation} }
	
	Since only finitely many \(d_k\) are positive we may omit writing trailing zeroes for \(\delta(\alpha)\) .
	
	Using Young diagrams we can visualize diagonal sequences in two ways. For example, the partition \(\alpha=(7,7,4,1,1,1) \in \mathcal{ P}(21)\) has diagonal sequence \(\delta(\alpha) = (1,2,3,4,4,4,2,1).\)
	\begin{center}
		\begin{tabular}[t]{cccccc}
			\multicolumn{1}{c}{$\alpha$} & & & & &  \multicolumn{1}{c}{$\alpha$}  \\ 
			$\young(1111111,2222221,3333,4,4,4)$ & & & & &
			$\young(abcdefg,bcdefgh,cdef,d,e,f)$  \\ 
		\end{tabular}
	\end{center}

	In the  Young diagram on the left the numbers in the squares indicate the position of the square on the respective diagonal. In the Young diagram on the right all squares on the same diagonal are marked by the same letter.

	\textbf{Definition} \textit{Let \(\Delta(n) = \{ \delta(\alpha) \,| \, \alpha \in \mathcal{P}(n) \}\).}

	This defines \(\delta\) as a map from \(\mathcal{P}(n)\) to \(\Delta(n)\) with \(\alpha \rightarrow \delta(\alpha)\). The map \(\delta\) is surjective by definition and it is not injective as is easy to see. Different partitions may have the same diagonal sequence. If we move a square of the Young diagram on an upper left to lower right diagonal in such a way that we end up with another Young diagram then the two partitions will have the same diagonal sequence. E.g., the partition \(\beta = (8,6,2,2,2,1) \in \mathcal{ P}(21)\)  
	
	\begin{center}
		\begin{tabular}[t]{cccccc}
			\multicolumn{1}{c}{$\beta$} & & & & &  \multicolumn{1}{c}{$\beta$}  \\ 
			$\young(11111111,222222,3333,444)$ & & & & &
			$\young(abcdefgh,bcdefg,cdef,def)$  \\ 
		\end{tabular}
	\end{center}
	has the same diagonal sequence as the partition \(\alpha=(7,7,4,1,1,1)\) above. The h-square in row 2 of \(\alpha\) moved to the last position in row 1 of \(\beta\) while the e-square in row 5 of \(\alpha\) moved to the second position in row 4 of \(\beta\) and the f-square in row 6 of \(\alpha\) moved to the fourth square in row 4 of \(\beta\). The reader may have noticed that there are more diagonal moves that yield different partitions with the same diagonal sequence. Theorem \ref{theorem:anylength} in Section \ref{sec:main} gives and explicit expression for the number of partitions with the same diagonal sequence. The transition from \(\alpha\) to \(\beta\) moved squares up along the diagonals. In fact, the squares of the Young diagram of \(\beta\) are as high up along their respective diagonals as possible.  We explore this idea in Section \ref{section:majorization}.   
	
	We can define an equivalence relation on \(\mathcal{P}(n)\) via \(\alpha \sim \beta\) if and only if \(\delta(\alpha)=\delta(\beta)\). The equivalence class of a partition \(\alpha \in \mathcal{P}(n)\) is characterized by the invariant \(\delta(\alpha)=d\). For \(d \in \Delta(n)\) define \([ \, d \, ] =\delta^{-1}(d)\subseteq \mathcal{P}(n)\).
	
	In particular, the partition \(\alpha \in \mathcal{P}(n)\) and its conjugate \(\alpha^*\) have the same diagonal sequence and hence belong to the same equivalence class, i.e., \(\alpha \sim \alpha^*\) and \(\delta(\alpha) = \delta(\alpha^*)\). The conjugate partition is obtained by reflecting the Young diagram about the upper left to lower right diagonal which maps each upper right to lower left diagonal onto itself.

	Diagonal sequences were used in \cite{AFNW} to find graphs with maximal sums of squares of degrees and in \cite{N} to find bipartite graphs with maximal sums of squares of degrees. 
	
	In the next section we prove properties of \(\delta(\alpha)\) in preparation for the main results of Sections \ref{section:majorization} and \ref{sec:main}. In Section \ref{section:majorization} we show that \([\,d\,]\) is a partially ordered set under majorization with unique maximal and unique minimal element. In Section \ref{sec:main} we give an explicit formula for the size of \(\delta(\alpha)=[\,d\,]\). In Section \ref{sec:examples} we list the 36 partitions in the equivalence class of \(\alpha=(7,7,4,1,1,1)\), i.e., we list all \(\alpha \in \mathcal{ P}(21)\) with \(\delta(\alpha)=(1,2,3,4,4,4,2,1)\). %Section \ref{section:arrangements} is an add-on that generalizes a result from Section \ref{sec:main}.

	\section{Properties of \(\delta(\alpha)\)} \label{sec:properties}
	
	We first prove that the sequence \(\delta(\alpha)\) increases from \(1\) to a positive integer \(q\) in increments of 1 and then continues in a non-increasing sequence.  
	
	\begin{lemma} \label{lemma:delta}
		Let \(\alpha= (\alpha_1, \cdots,\alpha_l) \in \mathcal{ P}(n)\) with \(\delta(\alpha)=(d_k)_{k \geq 1}\) .                           
		\begin{enumerate}
			\item \(d_{k+1}-d_k \leq 1\) with equality if and only if \(d_{j}=j\) for all \(1 \leq j \leq k+1\).
			\item If \(d_k \geq d_{k+1}\), then \(d_{k+1} \geq d_{k+2}\).
		\end{enumerate}
	\end{lemma}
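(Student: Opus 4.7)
My plan is to work directly with the sets $A_k := \{i : 1 \le i \le k,\ \alpha_i + i - 1 \ge k\}$, so that $d_k = |A_k|$. Geometrically, $A_k$ indexes the rows of the Young diagram that meet the $k$-th anti-diagonal. For the inequality in part (1), note that if $i \in A_{k+1} \setminus A_k$, then $\alpha_i + i - 1 \ge k+1 > k$, so the only way $i$ can fail to lie in $A_k$ is $i > k$, which forces $i = k+1$. Hence $A_{k+1} \setminus A_k \subseteq \{k+1\}$, and $d_{k+1} \le d_k + 1$.

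For the equivalence in the equality case, the direction ``$d_j = j$ for $j \le k+1$ implies $d_{k+1} - d_k = 1$'' is immediate. For the converse, I first rewrite $d_{k+1} = d_k + 1$ as the conjunction of (a) $k+1 \in A_{k+1}$, equivalently $\alpha_{k+1} \ge 1$, together with (b) $A_k \subseteq A_{k+1}$, equivalently no $i \le k$ satisfies $\alpha_i + i - 1 = k$. Assuming (a) and (b), I would show by downward induction on $i \in \{k, k-1, \ldots, 1\}$ that $\alpha_i \ge k - i + 2$. The base case $i = k$ uses $\alpha_k \ge \alpha_{k+1} \ge 1$ together with $\alpha_k \ne 1$ (from (b) at $i = k$). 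The inductive step uses $\alpha_i \ge \alpha_{i+1} \ge k - i + 1$ (by the inductive hypothesis applied to $i+1$) and $\alpha_i \ne k - i + 1$ (from (b) at $i$), which together force $\alpha_i \ge k - i + 2$. Combined with (a), this gives $\alpha_i + i - 1 \ge k + 1$ for all $1 \le i \le k+1$, so the $(k+1)$-st diagonal is full and $d_{k+1} = k+1$; the same inequality then forces $d_j = j$ for every $j \le k+1$ as well.

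The main obstacle is the downward induction just described: neither the monotonicity $\alpha_i \ge \alpha_{i+1}$ of the partition nor the ``no equality'' condition (b) alone is strong enough, but their interaction propagates the strict lower bound down the diagram step by step. Part (2) then follows by contraposition: if $d_{k+2} > d_{k+1}$, then part (1) forces $d_{k+2} = d_{k+1} + 1$ and hence $d_j = j$ for all $j \le k+2$; in particular $d_{k+1} = k+1 > k = d_k$, contradicting the hypothesis $d_k \ge d_{k+1}$.
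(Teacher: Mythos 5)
Your proof is correct. The paper's own proof is much terser and runs the key computation in the opposite direction: it observes that if $\alpha_j + j - 1 < k$ for some $j \le k$, then $\alpha_{j+1} + (j+1) - 1 \le \alpha_j + j < k+1$, i.e.\ a row that misses the $k$-th diagonal forces the \emph{next} row to miss the $(k+1)$-st diagonal; from this single forward-propagation step it deduces that $d_k < k$ implies $d_{k+1} \le d_k$, which yields the equality case of (1) and part (2) simultaneously. You instead prove the equality case of (1) head-on, by splitting $d_{k+1}=d_k+1$ into the two conditions $\alpha_{k+1}\ge 1$ and $A_k\subseteq A_{k+1}$ and then propagating the lower bound $\alpha_i \ge k-i+2$ \emph{backward} down the rows, after which (2) follows by contraposition from (1). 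The two arguments rest on the same one-line inequality $\alpha_{i+1}+i \le \alpha_i+i$ and are essentially contrapositives of one another; yours is longer but makes explicit why equality in (1) forces the entire initial staircase $d_j=j$ for $j\le k+1$, a point the paper's two-line proof leaves to the reader.
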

	
	\begin{proof} By definition \(d_{k+1} \leq d_k+1\).  
		Notice that if for some \(1 \leq j \leq k\) we have \(\alpha_j+j-1<k\), then \[\alpha_{j+1}+(j+1)-1 = \alpha_{j+1}+j \leq \alpha_j+j < k+1. \]
		Hence if \(d_j <j\), then \(d_j\geq d_{j+1}\). 
	\end{proof}
	
	\begin{cor} \label{cor:ds}
		For all partitions \(\alpha \in \mathcal{ P}(n)\) there exist unique integers \(q >0\) and \(s_1, \cdots ,s_q \geq 0\) such that 
		\begin{equation} \label{eqn:ds}
			\delta(\alpha) = \left( 1,2, \cdots ,q-1,q,q^{(s_q)},(q-1)^{(s_{q-1})}, \cdots ,1^{(s_1)} \right).
		\end{equation}
		with 
		\begin{equation}
			\dfrac{q(q+1)}{2}+\sum_{k=1}^{q} k \, s_k = n.
		\end{equation}
	\end{cor}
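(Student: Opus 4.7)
The plan is to let Lemma \ref{lemma:delta} do essentially all of the structural work: part (1) forces the initial segment to look like $1,2,\dots,q$, and part (2) forces the remainder to be non-increasing. Everything else is bookkeeping.

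First I would verify the trivial initial condition $d_1=1$. Since $\alpha_1\geq 1$ we have $\alpha_1+1-1\geq 1$, so $i=1$ lies in the set defining $d_1$, while the constraint $i\leq 1$ caps $d_1$ at $1$. Next I would define $q$ to be the largest integer such that $d_j=j$ for all $1\leq j\leq q$. This set of indices is nonempty (it contains $1$) and bounded above (since $d_k$ is eventually $0$), so $q$ is well-defined and satisfies $q\geq 1$. By the maximality of $q$, the equality case of Lemma \ref{lemma:delta}(1) cannot occur at $k=q$, so $d_{q+1}<d_q+1$, which together with $d_{q+1}\leq d_q+1$ forces $d_{q+1}\leq q=d_q$. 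In particular $d_q\geq d_{q+1}$.

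Next I would apply Lemma \ref{lemma:delta}(2) inductively starting from $k=q$ to conclude that the tail $(d_k)_{k\geq q}$ is non-increasing and bounded above by $q$. Any non-increasing finite sequence of nonnegative integers bounded by $q$ can be uniquely encoded by recording, for each value $k\in\{1,2,\dots,q\}$, the number $s_k$ of times it appears after the initial strictly increasing segment; values $k$ that do not appear simply correspond to $s_k=0$. Together with the initial block $1,2,\dots,q$, this gives exactly the representation in \eqref{eqn:ds}, and the uniqueness of $q$ and the $s_k$ is built into these definitions.

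Finally I would establish the arithmetic identity $\tfrac{q(q+1)}{2}+\sum_{k=1}^q k\,s_k=n$ by observing that $d_k$ counts precisely the cells on the $k$-th anti-diagonal of the Young diagram of $\alpha$ (the diagonal $i+j-1=k$), and that these anti-diagonals partition the diagram, giving $\sum_{k\geq 1}d_k=n$. Summing the right-hand side of \eqref{eqn:ds} yields the two contributions $1+2+\cdots+q=\tfrac{q(q+1)}{2}$ from the ascending block and $\sum_{k=1}^q k\,s_k$ from the non-increasing tail, which is the claimed identity.

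There is no real obstacle here; the only subtlety is that one must not assume the tail takes every value from $q$ down to $1$ (the example $(1,2,3,4,4,4,2,1)$ skips the value $3$), which is why the encoding must allow $s_k=0$. Once this is noted, the statement reduces to a direct reading of Lemma \ref{lemma:delta} plus the standard fact that diagonals partition the Young diagram.
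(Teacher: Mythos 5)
Your argument is correct and is exactly the derivation the paper intends: the corollary is stated without proof as an immediate consequence of Lemma \ref{lemma:delta}, and your write-up simply makes explicit the steps (initial segment $1,2,\dots,q$ from part (1), non-increasing tail from part (2), multiplicity encoding, and $\sum_k d_k = n$ since the diagonals partition the Young diagram). Nothing is missing.
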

	
	Let \(d=(d_1,d_2,\cdots,d_L) \in \Delta(n)\). We characterize two special elements \([ \, d \, ] \). The notation \(\overline{\alpha}\) and \(\underline{\alpha}\) will become clear in Section \ref{section:majorization}. 
	
	\begin{prop} \label{prop:s}
		Let \(s_1, \cdots,s_q \geq 0\) be integers such that \(\frac{q(q+1)}{2}+\sum_{k=1}^q k \, s_k = n\). Let  \(\overline{\alpha}_i=q-i+1+\sum_{k=i}^{q} s_k\) for \(1 \leq i \leq q\). Then 
		\begin{enumerate}
			\item \(\overline{\alpha} = (\overline{\alpha}_1, \cdots ,\overline{\alpha}_q) \in \mathcal{ P}(n)\),
			\item \(\overline{\alpha}_1 >  \overline{\alpha}_2 > \cdots >\overline{\alpha}_q \),
			\item \( \delta(\overline{\alpha})=\left( 1,2, \cdots ,q-1,q,q^{(s_q)},(q-1)^{(s_{q-1})}, \cdots ,1^{(s_1)} \right) \),
			\item \(s_i=\overline{\alpha}_i-\overline{\alpha}_{i+1}-1\), \(1 \leq i \leq q-1\), and \(s_q=\overline{\alpha}_q-1\).
		\end{enumerate}
	\end{prop}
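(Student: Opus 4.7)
The plan is to unwind the definition of $\overline{\alpha}_i$ and exploit the key algebraic identity
\[
\overline{\alpha}_i + i - 1 \;=\; q + \sum_{k=i}^{q} s_k,
\]
which will make the diagonal condition $\overline{\alpha}_i+i-1\geq k$ transparent because the right side is non-increasing in $i$.

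First I would tackle parts (2) and (4) together, since they are pure algebra. A direct subtraction gives $\overline{\alpha}_i-\overline{\alpha}_{i+1}=1+s_i$ for $1\leq i\leq q-1$, which simultaneously proves strict decrease (as $s_i\geq 0$) and the formula $s_i=\overline{\alpha}_i-\overline{\alpha}_{i+1}-1$; the remaining identity $s_q=\overline{\alpha}_q-1$ comes from setting $i=q$ in the definition, since then $\overline{\alpha}_q=1+s_q$. Part (1) is then immediate: the $\overline{\alpha}_i$ are strictly decreasing positive integers (the smallest being $1+s_q\geq 1$), and the sum is computed by swapping the order of summation,
\[
\sum_{i=1}^{q}\overline{\alpha}_i \;=\; \sum_{i=1}^{q}(q-i+1) + \sum_{i=1}^{q}\sum_{k=i}^{q}s_k \;=\; \frac{q(q+1)}{2}+\sum_{k=1}^{q} k\, s_k \;=\; n.
\]

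The substantive step is (3), and here the main tool is the displayed identity above. Since $\sum_{k=i}^{q} s_k$ is non-increasing in $i$, the set $\{i\leq k:\overline{\alpha}_i+i-1\geq k\}$ is an initial segment $\{1,2,\ldots,j\}$ of $\{1,\ldots,\min(k,q)\}$, so $d_k$ simply records the largest $i\leq q$ for which $q+\sum_{j\geq i} s_j\geq k$. For $1\leq k\leq q$ the inequality is automatic (the left side is at least $q\geq k$) so $d_k=k$, yielding the initial ramp $1,2,\ldots,q$. For $k=q+m$ with $m\geq 1$, the condition becomes $\sum_{j\geq i} s_j\geq m$; writing $T_i:=\sum_{j=i}^{q} s_j$, this holds precisely when $i\leq q-j$ where $j$ is determined by $T_{q-j+1}<m\leq T_{q-j}$. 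Running $m$ from $1$ to $T_1$ and counting how many $m$ fall in each interval $(T_{q-j+1},T_{q-j}]$ shows that the value $q-j$ appears exactly $s_{q-j}$ times, which is the claimed tail $q^{(s_q)},(q-1)^{(s_{q-1})},\ldots,1^{(s_1)}$. (Intervals with $s_{q-j}=0$ are empty and contribute nothing, which is consistent with the notation.)

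The only point requiring care is bookkeeping the boundary between the increasing prefix $1,2,\ldots,q$ and the block of $q$'s of length $s_q$: the value $d_q=q$ comes from the prefix count, while $d_{q+1},\ldots,d_{q+s_q}$ come from the $m\in\{1,\ldots,s_q\}$ range above, giving a total of $s_q$ additional copies of $q$ after the initial $q$, matching the exponent $s_q$ in \eqref{eqn:ds}. With that indexing verified, parts (1)--(4) are all established.
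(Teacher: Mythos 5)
Your proof is correct and follows essentially the same route as the paper: both rest on the identity $\overline{\alpha}_i+i-1=q+\sum_{j=i}^{q}s_j$, whose right side is non-increasing in $i$, and then split into the cases $k\leq q$ and $k=q+m$ to read off the diagonal sequence. The only difference is cosmetic: for part (4) you telescope $\overline{\alpha}_i-\overline{\alpha}_{i+1}=1+s_i$ directly, whereas the paper inverts the upper-triangular matrix $T$ in $\overline{\alpha}=v+Ts$ to reach the same formula.
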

	
	\begin{proof} 
		By definition \(\overline{\alpha}_1 > \overline{\alpha}_2 > \cdots > \overline{\alpha}_q\) and since \[\sum_{i=1}^q \overline{\alpha}_i = \sum_{i=1}^q (q-i) + \sum_{i=1}^q \, \sum_{k=i}^q s_k = \frac{q(q+1)}{2}+\sum_{k=1}^q k \, s_k = n,\] \(\overline{\alpha}=(\overline{\alpha}_1, \cdots ,\overline{\alpha}_q)\) is a partition of  \(n\), i.e., \(\overline{\alpha} \in \mathcal{ P}(n).\) 
		
		Next, we will show that \begin{equation}
			\delta(\overline{\alpha})=\left( 1,2, \cdots ,q-1,q,q^{(s_q)},(q-1)^{(s_{q-1})}, \cdots ,1^{(s_1)} \right).
		\end{equation}
		
		If \(1 \leq k \leq q\), then  \(\overline{\alpha}_i+i-1= q+\sum_{j=i}^q s_j \geq q \geq k\) for \(1 \leq i \leq k\). Hence \(d_k=k\) for \(1 \leq k \leq q\).
		
		If \(q  < k \leq q+ s_q\), then 
		\begin{equation} \nonumber   
			\overline{\alpha_i}+q-1  = q+\sum_{j=i}^q s_j \geq q+s_q \geq k \mbox{ for } 1 \leq i \leq q.
		\end{equation}
		Hence \(d_k=q\) for \(q  < k \leq q+ s_q\).
		
		Similarly, if \(q+\sum_{j=i}^qs_j < k \leq q+\sum_{j=i-1}^q s_j\), \(i \geq 2\), then \begin{equation} \nonumber
			\begin{split}
				\overline{\alpha}_j +j-1 & = q+\sum_{l=j}^qs_l < k \mbox{ for } j \geq i \\
				\overline{\alpha}_j +j-1 & = q+\sum_{l=j}^q s_l \geq  k \mbox{ for } j < i.\\ 
			\end{split}
		\end{equation} 
		Hence \(d_k=i-1\) for \(q+\sum_{j=i}^qs_j < k \leq q+\sum_{j=i-1}^q s_j\). 
		
		Lastly, we show how to compute the values of \(s_i\), \(1 \leq i \leq q\), given \(\overline{\alpha}\).
		
		Let \(s=(s_1,s_2,, \cdots, s_q)\), \(v=(q,q-1, \cdots,1)\) and 
		%\[s= \begin{bmatrix}
			%    s_1 \\
			%    s_2 \\
			%    s_3 \\
			%    \vdots \\
			%    s_q
			%\end{bmatrix}, \qquad
			%v= \begin{bmatrix}
				%   q \\
				%    q-1 \\
				%    q-2 \\
				%    \vdots \\
				%    1
				%\end{bmatrix}, \qquad
				%\overline{\alpha}= \begin{bmatrix}
					%    \overline{\alpha}_1 \\
					%    \overline{\alpha}_2 \\
					%    \overline{\alpha}_3 \\
					%    \vdots \\
					%    \overline{\alpha}_q
					%\end{bmatrix},  \qquad
					\[T= \begin{bmatrix}
						1 & 1 & 1 & \cdots & 1 \\
						0 & 1 & 1 & \cdots & 1 \\
						0 & 0 & 1 & \cdots & 1 \\
						\vdots & \vdots & \vdots & \ddots & \vdots \\
						0 & 0 & 0 & \cdots & 1 \\
					\end{bmatrix}. \]
					It follows that \(\overline{\alpha}=v+Ts\) and hence \(s=T^{-1}(\overline{\alpha}-v)\). Since
					\[T^{-1} = \begin{bmatrix}
						1 & -1 & 0 & 0 & \cdots & 0 & 0 \\
						0 & 1 & -1 & 0 & \cdots & 0 & 0 \\
						0 & 0 & 1 & -1 & \cdots & 0 & 0 \\
						\vdots & \vdots & \vdots & \ddots & \ddots & \vdots & \vdots \\
						\vdots & \vdots & \vdots & \vdots & \ddots & \ddots & \vdots \\
						0 & 0 & 0 & 0 & \cdots & 1  & -1 \\
						0 & 0 & 0 & 0 & \cdots & 0 & 1
					\end{bmatrix}\] we get \(s_i = \overline{\alpha}_i - \overline{\alpha}_{i+1} - 1\) for \(1 \leq i <q\) and \(s_q = \overline{\alpha}_q - 1.\)
				\end{proof}

				There are some consequences of Proposition \ref{prop:s} that are worth pointing out. 
				
				\begin{cor}
					\(|\Delta(n)|\) is equal to the number of partitions of \(\mathcal{ P}(n)\) with distinct parts which is equal to the number of partitions in \(\mathcal{P}\) with odd parts. 
				\end{cor}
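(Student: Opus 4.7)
The plan is to read off the corollary directly from Proposition \ref{prop:s} together with Euler's classical identity. The proposition already encodes almost everything: for each diagonal sequence the construction $\overline{\alpha}_i = q - i + 1 + \sum_{k=i}^q s_k$ yields a strictly decreasing (hence distinct-parts) partition of $n$, and item (4) tells us exactly how to recover the parameters $s_i$ from $\overline{\alpha}$. So the first equality should be a bijection, and the second equality is Euler's theorem.

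More precisely, first I would invoke Corollary \ref{cor:ds} to identify $\Delta(n)$ with the set of tuples $(q; s_1, \dots, s_q)$ of nonnegative integers satisfying $\tfrac{q(q+1)}{2} + \sum_k k s_k = n$. Then I would define a map $\Phi$ from $\Delta(n)$ to the set $\Dc(n)$ of partitions of $n$ with distinct parts by sending $d \in \Delta(n)$ (with associated parameters $q, s_1, \dots, s_q$) to $\overline{\alpha}$ as in Proposition \ref{prop:s}; parts (1) and (2) of the proposition guarantee $\Phi(d) \in \Dc(n)$.

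For the inverse, given $\mu = (\mu_1 > \mu_2 > \cdots > \mu_q \geq 1) \in \Dc(n)$, I would set $s_i := \mu_i - \mu_{i+1} - 1 \geq 0$ for $1 \leq i < q$ and $s_q := \mu_q - 1 \geq 0$, and check via a telescoping computation that $\mu_i = q - i + 1 + \sum_{k=i}^q s_k$ and $\tfrac{q(q+1)}{2} + \sum_k k s_k = \sum_i \mu_i = n$. The resulting tuple $(q; s_1, \dots, s_q)$ corresponds via Corollary \ref{cor:ds} to some $d \in \Delta(n)$, and Proposition \ref{prop:s}(3)--(4) show that $\Phi$ and this map are mutual inverses, establishing $|\Delta(n)| = |\Dc(n)|$.

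Finally, the equality between $|\Dc(n)|$ and the number of partitions of $n$ with all odd parts is Euler's classical identity, which I would simply cite. No step is a genuine obstacle: essentially all the content has already been absorbed into Proposition \ref{prop:s}, and the only routine verifications are that the two maps between $\Delta(n)$ and $\Dc(n)$ compose to the identity, which is transparent from parts (3) and (4) of that proposition.
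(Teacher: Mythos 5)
Your proposal is correct and follows essentially the same route as the paper: both arguments use Proposition \ref{prop:s} to set up the correspondence between diagonal sequences and strictly decreasing partitions, and both cite Euler's identity for the second equality. You simply spell out the inverse map and the bijection verification explicitly, which the paper's one-line proof leaves implicit.
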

				
				\begin{proof}
					By Proposition \ref{prop:s} for each diagonal sequence \(\delta \in \Delta(n)\) there is a unique partition \(\overline{\alpha}\) with \(\overline{\alpha}_1 > \overline{\alpha}_2> \cdots > \overline{\alpha}_q\). The second part follows from a well-know result that the number of partitions in \(\mathcal{P}(n)\) with distinct parts is equal to the number of partitions in \(\mathcal{P}(n)\) with odd parts. 
				\end{proof}

				Let \(\underline{\alpha}=\overline{\alpha}^*\) denote the conjugate partition of \(\overline{\alpha}\) .
				
				\begin{prop} \label{prop:alphaunderbar}
					If \(\alpha \in \mathcal{P}(n)\) with \(d(\alpha)=\left( 1,2, \cdots ,q-1,q,q^{(s_q)},(q-1)^{(s_{q-1})}, \cdots ,1^{(s_1)} \right)\), then \[\underline{\alpha} = (q^{(s_q+1)}, (q-1)^{(s_{q-1}+1)}, \cdots, 1^{(s_1+1)}).\]
				\end{prop}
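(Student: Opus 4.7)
The plan is to combine part (4) of Proposition \ref{prop:s}, which computes the consecutive differences of the parts of \(\overline{\alpha}\), with the standard recipe for conjugating a partition with strictly decreasing parts.

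First, I would recall that by Proposition \ref{prop:s}(2) the partition \(\overline{\alpha}\) has strictly decreasing parts \(\overline{\alpha}_1 > \overline{\alpha}_2 > \cdots > \overline{\alpha}_q > 0\), and by part (4) the gaps are
\[
\overline{\alpha}_i - \overline{\alpha}_{i+1} = s_i + 1 \quad (1 \leq i < q), \qquad \overline{\alpha}_q = s_q + 1.
\]
Next, I would use the general fact that for any partition \(\beta\) with distinct parts \(\beta_1 > \beta_2 > \cdots > \beta_q > 0\), the multiplicity of \(k\) in the conjugate \(\beta^*\) equals \(\beta_k - \beta_{k+1}\) for \(1 \leq k < q\), and the multiplicity of \(q\) equals \(\beta_q\). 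This is immediate from \((\beta^*)_j = |\{i : \beta_i \geq j\}|\): as \(j\) increases through the interval \((\beta_{k+1}, \beta_k]\), the count drops by exactly one, producing \(\beta_k - \beta_{k+1}\) columns of height \(k\); for \(k=q\), the count is \(q\) on the interval \([1, \beta_q]\), giving \(\beta_q\) columns of height \(q\).

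Applying this to \(\beta = \overline{\alpha}\), I would read off that \(\underline{\alpha} = \overline{\alpha}^*\) has exactly \(s_q + 1\) parts equal to \(q\) and exactly \(s_k + 1\) parts equal to \(k\) for each \(1 \leq k < q\). Since the parts of a partition are listed in weakly decreasing order, this gives
\[
\underline{\alpha} = \bigl(q^{(s_q+1)}, (q-1)^{(s_{q-1}+1)}, \ldots, 1^{(s_1+1)}\bigr),
\]
as claimed. As a sanity check I would verify \(\sum_{k=1}^q k(s_k+1) = \frac{q(q+1)}{2} + \sum_{k=1}^q k s_k = n\), matching Corollary \ref{cor:ds}.

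There is no real obstacle here: the proof is essentially a direct computation, and the only point requiring care is the bookkeeping in the conjugation step, where one must correctly identify the intervals of column-heights determined by the strictly decreasing sequence \((\overline{\alpha}_i)\). Proposition \ref{prop:s}(4) is doing all the heavy lifting by supplying the consecutive differences in closed form.
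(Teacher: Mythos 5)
Your proof is correct and follows essentially the same route as the paper: conjugate \(\overline{\alpha}\), observe that strict decrease of its parts means the column of index \(j\) has height \(i\) exactly for \(\overline{\alpha}_{i+1} < j \leq \overline{\alpha}_i\), and read off the multiplicities from Proposition \ref{prop:s}(4). Your version just spells out the conjugation bookkeeping a bit more explicitly than the paper does.
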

				
				\begin{proof}
					Let \(\overline{\alpha}^* =(\overline{\alpha_1}^*,\overline{\alpha_2}^*, \cdots ,\overline{\alpha_t}^*) \). Since \(\overline{\alpha}_1 > \overline{\alpha}_2> \cdots > \overline{\alpha}_q\), \(t=\overline{\alpha_1}\) and for \(\overline{\alpha}_{i+1} < k \leq \overline{\alpha}_{i}\) we have \(\overline{\alpha_k}^*=\underline{\alpha}_k = i\). By Proposition \ref{prop:s}, \(\overline{\alpha}_{i}-\overline{\alpha}_{i+1} = s_i+1\) and the result follows. 
				\end{proof}
				
				\begin{cor} \label{setof integers}
					The multiset of integers of \(\delta(\overline{\alpha})=\delta(\underline{\alpha})\) is equal to the multiset of integers of \(\underline{\alpha}\).  
				\end{cor}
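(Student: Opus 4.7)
The plan is to compare the two multisets by reading off multiplicities directly from the explicit descriptions furnished by Corollary \ref{cor:ds} and Proposition \ref{prop:alphaunderbar}. On the diagonal-sequence side, the sequence
\[
\delta(\overline{\alpha}) = \left(1, 2, \ldots, q-1, q,\, q^{(s_q)}, (q-1)^{(s_{q-1})}, \ldots, 1^{(s_1)}\right)
\]
contributes each integer $k$ with $1 \leq k \leq q$ exactly once through the ascending initial run $1,2,\ldots,q$, and an additional $s_k$ times through the descending tail. Hence as a multiset, $\delta(\overline{\alpha})$ has each $k \in \{1,\ldots,q\}$ with multiplicity $s_k + 1$, and contains no other values.

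On the partition side, Proposition \ref{prop:alphaunderbar} states directly that
\[
\underline{\alpha} = \left(q^{(s_q+1)}, (q-1)^{(s_{q-1}+1)}, \ldots, 1^{(s_1+1)}\right),
\]
so the part $k$ appears in $\underline{\alpha}$ exactly $s_k + 1$ times for each $1 \leq k \leq q$, and no other values appear. Both multisets are therefore supported on $\{1, \ldots, q\}$ with the identical multiplicity function $k \mapsto s_k + 1$, which yields the claimed equality.

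There is essentially no obstacle here: the corollary is an immediate bookkeeping consequence of two results already established. The only point worth tracking is that the single occurrence of $k$ in the ascending ramp of $\delta(\overline{\alpha})$ must be combined with the $s_k$ occurrences in its descending tail, and Proposition \ref{prop:alphaunderbar} is already phrased with exponent $s_k + 1$ so that this combination matches term-by-term. The edge case $s_k = 0$ requires no special treatment, since then $k$ appears exactly once on each side.
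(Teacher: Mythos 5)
Your proof is correct and matches the paper's intent exactly: the corollary is stated without proof precisely because it follows by the multiplicity count you give, combining the single occurrence of each $k$ in the ascending ramp of Corollary \ref{cor:ds} with the $s_k$ occurrences in the tail and comparing against the exponents $s_k+1$ in Proposition \ref{prop:alphaunderbar}. Nothing further is needed.
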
  
				
				\begin{cor} \label{cor:twoalphasequal}
					Assume \(\alpha = (\alpha_1,\alpha_2, \cdots,\alpha_t) \in \mathcal{ P}(n)\). If \(\alpha \neq \overline{\alpha}\), then \(\alpha_i = \alpha_{i+1}\) for some \( 1 \leq i \leq t \). Equivalently, if \(\alpha \neq \underline{\alpha}\), then \(\alpha_i - \alpha_{i+1}>1\) for some \( 1 \leq i \leq t \).
				\end{cor}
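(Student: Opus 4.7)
The plan is to reduce both parts of the Corollary to the parametrization of Proposition \ref{prop:s}, and to pass between the two parts via conjugation.

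For the first statement, I would argue the contrapositive: if $\alpha=(\alpha_1,\dots,\alpha_t) \in \mathcal{P}(n)$ has strictly decreasing parts, then $\alpha = \overline{\alpha}$. Given $\alpha_1 > \alpha_2 > \cdots > \alpha_t \geq 1$, define $s_i := \alpha_i - \alpha_{i+1} - 1$ for $1 \leq i < t$ and $s_t := \alpha_t - 1$. Each $s_i$ is a nonnegative integer, and telescoping the identity $\alpha_i = \alpha_{i+1} + 1 + s_i$ down to the last part yields $\alpha_i = (t-i+1) + \sum_{k=i}^t s_k$. This is precisely the formula defining $\overline{\alpha}$ in Proposition \ref{prop:s} with $q=t$; in particular $\frac{t(t+1)}{2}+\sum_{k=1}^t k\,s_k = \sum_i \alpha_i = n$. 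Applying parts (1) and (3) of Proposition \ref{prop:s}, the partition $\alpha$ must coincide with the unique $\overline{\alpha}$ associated to its own diagonal sequence.

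For the second statement, I would deduce it from the first by passing to the conjugate. Assume $\alpha_i - \alpha_{i+1} \leq 1$ for every $1 \leq i \leq t$, with the convention $\alpha_{t+1}=0$; this in particular forces $\alpha_t = 1$. Because the weakly decreasing sequence $\alpha_1 \geq \alpha_2 \geq \cdots \geq \alpha_t = 1$ drops by at most one at each step, it realizes every integer in $\{1,2,\dots,\alpha_1\}$ as some $\alpha_i$. Using the standard identity $\alpha^*_j - \alpha^*_{j+1} = |\{i : \alpha_i = j\}|$, the conjugate $\alpha^*$ therefore has strictly decreasing parts. The first part applied to $\alpha^*$ gives $\alpha^* = \overline{\alpha^*}$; but $\delta(\alpha^*) = \delta(\alpha)$, so $\overline{\alpha^*} = \overline{\alpha}$, and conjugating once more yields $\alpha = \overline{\alpha}^{*} = \underline{\alpha}$, as required.

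The only delicate point is the interpretation of the boundary index $i=t$ in the second statement, namely that $\alpha_t - \alpha_{t+1} > 1$ is read (with $\alpha_{t+1}=0$) as $\alpha_t \geq 2$. This is exactly the condition whose failure guarantees that the sequence of parts reaches all the way down to $1$, and hence that the conjugate has distinct parts; without it, the reduction to the first statement would not close.
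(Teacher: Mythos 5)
Your proof is correct, and it is the natural way to fill in the argument the paper leaves unstated: the first claim reduces, via the contrapositive and the formulas in part (4) of Proposition \ref{prop:s}, to the fact that a strictly decreasing partition coincides with the $\overline{\alpha}$ of its own diagonal sequence, and the second claim follows by conjugation exactly as the paper's definition $\underline{\alpha}=\overline{\alpha}^*$ intends. Your remark on the boundary convention $\alpha_{t+1}=0$ for the index $i=t$ is a worthwhile clarification, since without it the second statement's quantifier range would be ambiguous.
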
 
				
				In our example, \(\alpha=(7,7,4,1,1,1) \in \mathcal{ P}(21)\) has diagonal sequence  \[ \, d(\alpha) = (1,2,3,4,4,4,2,1) = (1,2,3,4,4^{(2)},3^{(0)},2^{(1)},1^{(1)})\] 
				and hence \(q=4\), \(s_4=2, s_3=0,s_2=1, s_1=1\). Proposition \ref{prop:s} implies that \(\overline{\alpha}=(8,6,4,3) \in \mathcal{ P}(21)\) while Proposition \ref{prop:alphaunderbar} implies that \(\underline{\alpha}=(4,4,4,3,2,2,1,1) \in \mathcal{ P}(21)\). 
				\begin{center}
					\begin{tabular}[t]{ccccccc}
						\multicolumn{1}{c}{$\alpha$} & &
						\multicolumn{1}{c}{$\alpha^*$} & &
						\multicolumn{1}{c}{$\overline{\alpha}$} & &
						\multicolumn{1}{c}{$\underline{\alpha}$} \\ 
						\topalign{$\young(1111111,2222221,3333,4,4,4)$} & & 
						\topalign{$\young(111111,222,332,432,43,41,21)$} & & 
						\topalign{$\young(11111111,222222,3333,444)$} & &  \topalign{$\young(1111,2221,3321,432,43,41,2,1)$} \\ 
					\end{tabular}
				\end{center}
				The partition \(\overline{\alpha}\) is obtained from the partition \(\alpha\) by moving all squares as far up along their respective diagonals as possible while the partition \(\underline{\alpha}=(4,4,4,3,2,2,1,1)\) is obtained from the partition \(\alpha\) by moving all squares as far down along their respective diagonals as possible.

				Given a diagonal sequence \(d \in \Delta(n)\) and \(\alpha =(\alpha_1, \cdots, \alpha_t) \in [ \, d \, ] \) the values of \(\alpha_1\) are restricted to a certain set. Define \[ A_1=\{q, q+s_q,q+s_q+s_{q-1}, \cdots, q+\sum_{i=1}^q s_i\}.\] 
				
				\begin{prop} \label{prop:alpha1}
					If \( \delta(\alpha) = \left( 1,2, \cdots ,q-1,q,q^{(s_q)},(q-1)^{(s_{q-1})}, \cdots ,1^{(s_1)} \right)\), then \(\alpha_1 \in A_1.\) Equivalently, if \( \delta(\alpha) = \left( 1,2, \cdots ,q-1,q,q^{(s_q)},(q-1)^{(s_{q-1})}, \cdots ,1^{(s_1)} \right)\), then \(\alpha \in \mathcal{P}(n,k)^*\) for some \(k \in A_1\).
				\end{prop}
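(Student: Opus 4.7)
The plan is to remove the top row of $\alpha$ and bookkeep what the diagonal sequence of the rest must look like. Set $m = \alpha_1$ and let $\rho = (\alpha_2, \alpha_3, \ldots, \alpha_t)$ be the partition obtained from $\alpha$ by deleting its first part (with $\rho$ empty if $t = 1$). A direct count from the definition of $\delta$ gives, for every $k \ge 1$,
\begin{equation*}
	d_k(\rho) \;=\; d_{k+1}(\alpha) \;-\; \Char_{k < m},
\end{equation*}
since $d_{k+1}(\alpha)$ counts rows $i \le k + 1$ with $\alpha_i + i - 1 \ge k + 1$ and row $1$ contributes to this count precisely when $\alpha_1 \ge k + 1$. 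The only input about $\rho$ I use is that it is a partition, so $d(\rho)$ must itself satisfy Lemma~\ref{lemma:delta} and hence have the shape of Corollary~\ref{cor:ds}.

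The bound $m \ge q$ is immediate: $d_q(\alpha) = q$ forces rows $1, \ldots, q$ all into the count, so $\alpha_1 \ge q$. If $m = q$ then $m \in A_1$ and we are done, so assume $m > q$. Then the displayed formula gives $d_j(\rho) = j$ for $1 \le j \le q - 1$ and $d_q(\rho) = d_{q+1}(\alpha) - 1 \le q - 1$, so $d(\rho)$ has peak $q - 1$ attained at position $q - 1$ and must be non-increasing thereafter.

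The crux is to verify non-increase of $d(\rho)$ at the ``seam'' where the indicator $\Char_{k < m}$ switches off, namely the step $j = m - 1 \to j = m$:
\begin{equation*}
	d_m(\rho) - d_{m-1}(\rho) \;=\; d_{m+1}(\alpha) - \bigl( d_m(\alpha) - 1 \bigr) \;=\; d_{m+1}(\alpha) - d_m(\alpha) + 1.
\end{equation*}
This is $\le 0$ precisely when $d_m(\alpha) > d_{m+1}(\alpha)$. For every other consecutive pair past position $q - 1$ in $d(\rho)$, the two indicator terms cancel and the difference simplifies to $d_{j+2}(\alpha) - d_{j+1}(\alpha)$ on the non-increasing tail of $d(\alpha)$, which is automatically $\le 0$. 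So the sole binding constraint is that $d(\alpha)$ strictly drops at position $m$.

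Reading off Corollary~\ref{cor:ds}, the positions $k > q$ at which $d(\alpha)$ strictly drops are exactly the plateau endings $q + s_q,\, q + s_q + s_{q-1},\, \ldots,\, q + s_q + \cdots + s_1$ that exceed $q$; together with the boundary value $m = q$ this is $A_1$, so $\alpha_1 \in A_1$. The main obstacle is the seam computation above; once one confirms that every other transition in $d(\rho)$ is automatically a non-increase because it inherits one from $d(\alpha)$, matching the admissible values of $m$ with $A_1$ is a direct unpacking of Corollary~\ref{cor:ds}.
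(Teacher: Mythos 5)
Your proof is correct and takes essentially the same route as the paper: delete the first part, observe that the diagonal sequence of the remainder is the shifted sequence $d_{k+1}(\alpha)-\Char_{k<\alpha_1}$, and invoke Lemma~\ref{lemma:delta} for that sequence to force $\alpha_1$ to be either $q$ or a strict-drop position of $d(\alpha)$, i.e.\ an element of $A_1$. You make explicit the ``seam'' computation (and the index shift) that the paper's one-line appeal to ``the constraints imposed by Lemma~\ref{lemma:delta}'' leaves implicit, but the underlying argument is the same.
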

				
				\begin{proof}
					If \(\alpha=(\alpha_1,\alpha_2,...\alpha_t)\) with \( \delta(\alpha) = \left( 1,2, \cdots ,q-1,q,q^{(s_q)},(q-1)^{(s_{q-1})}, \cdots ,1^{(s_1)} \right)\) let \(\alpha'=(\alpha_2,...\alpha_t)\). The lengths of first \(\alpha_1\) diagonals of \(\alpha'\) decrease by \(1\) while the lengths of the other diagonal stay the same, i.e., \(\delta(\alpha')=(d_k')_{k \geq 1}\) with \(d_i' = d_i -1\) for \(1 \leq i \leq \alpha_1\) and \(d_i' = d_i \) for \(i > \alpha_1\). The constraints imposed by Lemma \ref{lemma:delta} imply that \(\alpha_1 \in \{q, q+s_q,q+s_q+s_{q-1}, \cdots, q+\sum_{i=1}^q s_i\}.\)
				\end{proof}

				Partitions with the same diagonal sequence have the same sum of the squares of their parts and the sum of the squares of the parts of their conjugates.

				For \(\alpha=(\alpha_1,\cdots,\alpha_t) \in \mathcal{P}(n)\) define \(s(\alpha) = \sum_{i=1}^t \alpha_i^2.\) Let \(\alpha^*=\gamma=(\gamma_1,\gamma_2, \cdots,\gamma_s)\), \(s=\alpha_1\). The Young diagram of \(\alpha\) is contained in a \(t\) by \(s\) rectangle. Hence \(d_k=0\) for all \(k \geq t+s\). Let \(r= t+s-1\).  
				
				\begin{prop} \label{lemma:diagonal} If \(\alpha \in \mathcal{P}(n)\), then 
					\(s(\alpha) +s(\alpha^*) =  (2,4, \cdots ,2r) \cdot \delta(\alpha) =2\sum_{k=1}^r k \, d_k.\) As a consequence, if \(\delta(\alpha)=\delta(\beta)\), then \(s(\alpha)+s(\alpha^*) = s(\beta)+s(\beta^*).\)
				\end{prop}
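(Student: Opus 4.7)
The approach I would take is to reinterpret both sides of the identity as weighted counts of cells of the Young diagram \(Y\) of \(\alpha\), and then compare them by double counting. First I would observe that the set appearing in the definition of \(d_k\) is, geometrically, precisely the set of indices \(i\) such that the cell \((i, k-i+1)\) lies in \(Y\): the condition \(1 \leq i \leq k\) is equivalent to \(k-i+1 \geq 1\), and the condition \(\alpha_i + i - 1 \geq k\) is equivalent to \(\alpha_i \geq k-i+1\). Thus \(d_k\) is exactly the length of the \(k\)-th antidiagonal \(\{(i,j)\in Y : i+j-1=k\}\), and consequently
\[
\sum_{k=1}^{r} k\, d_k \;=\; \sum_{(i,j) \in Y}(i+j-1).
\]

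Next I would split this sum by linearity as \(\sum_{(i,j) \in Y} i + \sum_{(i,j) \in Y} j - n\), using that \(|Y|=n\), and evaluate each piece by column and row sums respectively. Summing column by column, the rows \(i\) with \((i,j) \in Y\) are exactly \(1,2,\ldots,\gamma_j\), giving
\[
\sum_{(i,j) \in Y} i \;=\; \sum_{j=1}^{s} \frac{\gamma_j(\gamma_j+1)}{2} \;=\; \frac{s(\alpha^*)+n}{2},
\]
and the symmetric row-by-row calculation yields \(\sum_{(i,j) \in Y} j = (s(\alpha)+n)/2\).

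Plugging these two identities into the previous expression and multiplying by \(2\) then produces
\[
2\sum_{k=1}^{r} k\, d_k \;=\; (s(\alpha^*)+n) + (s(\alpha)+n) - 2n \;=\; s(\alpha)+s(\alpha^*),
\]
which is the claimed identity. The consequence for partitions with equal diagonal sequences is immediate: if \(\delta(\alpha)=\delta(\beta)\), then the left-hand side agrees for \(\alpha\) and \(\beta\), and hence so does the right-hand side.

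The argument is essentially a routine double-counting, and I do not expect any real obstacle. The only conceptual step is the geometric reinterpretation of \(d_k\) as the length of the \(k\)-th antidiagonal of \(Y\); once that observation is in hand, everything reduces to the well-known identity \(\sum_i \alpha_i^2 + n = 2\sum_j j\, \gamma_j\) and its conjugate.
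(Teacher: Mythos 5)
Your proof is correct and follows essentially the same route as the paper: both arguments double-count over the cells of the Young diagram after observing that \(d_k\) is the number of cells on the \(k\)-th antidiagonal. The only cosmetic difference is that the paper assigns each cell \((i,j)\) the weight \((2j-1)+(2i-1)=2k\) via the ``sum of the first \(n\) odd integers'' identity, whereas you use triangular numbers and the identity \(\sum_j \gamma_j(\gamma_j+1)/2=(s(\alpha^*)+n)/2\); both reduce to the same computation.
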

				
				We note, that the converse of the statement is not true. Let \(\alpha=(6,2,1), \beta=(5,4) \in \mathcal{P}(9)\). Then \(\delta(\alpha)=(1,2,3,1,1,1) \neq (1,2,2,2,2)=\delta(\beta) \) and \(s(\alpha)+s(\alpha^*) = s(\beta)+s(\beta^*)=58\).
				
				\begin{proof}
					We use the well-known fact that \(n^2\) is the sum of the first \(n\) odd integers. Consider the square in the \(i\)-th row and \(j\)-th column of the Young diagram of \(\alpha\). Such a square lies on the \(k\)-th diagonal for \(k=i+j-1\). As such it contributes \(2j-1\) to \(\alpha_i^2\) and contributes \(2i-1\) to \(\beta_j^2\). Hence such a square contributes \(2k\) to \(s(\alpha)+s(\alpha^*)\). Every square on the \(k\)-th diagonal contributes the same amount, \(2k\), to \(s(\alpha)+s(\alpha^*)\). Hence the total contributions of all squares on the \(k\)-th diagonal of the Young diagram is \(2k d_k\). Summing over \(k\) yields the result.     
				\end{proof}

				\section{Majorization order on \([ \, d \, ] \)} \label{section:majorization}
				
				The set \(\mathcal{P}(n)\) is a partially ordered set under majorization. Recall that if \(\alpha=(\alpha_1,\alpha_2, \cdots ,\alpha_s) \in \mathcal{P}(n)\) and \(\beta=(\beta_1,\beta_2, \cdots ,\beta_t) \in \mathcal{P}(n)\) then we say \(\alpha\) majorizes \(\beta\) if and only if  \(\sum_{i=1}^k \alpha_i \geq \sum_{i=1}^k \beta_i\) for all  \(1 \leq k \leq \min \{s,t\}.\) If \(\alpha\) majorizes \(\beta\) we write \(\alpha \succ \beta\). The set of partitions \(\mathcal{P}(n)\) is a partially ordered set under majorization as all subsets \(\mathcal{P}(n)\). In particular, we will show that if \(d \in \Delta(n)\), then the partially ordered set \([ \, d \, ] \) has a unique maximal element \(\overline{\alpha}\) and a unique minimal element \(\underline{\alpha}\), where \(\overline{\alpha}\) and \(\underline{\alpha}\) are as defined in Section \ref{sec:properties}.

				\begin{prop}
					Let \(d \in \Delta(n)\). If \(\alpha \in [ \, d \, ]  \subseteq \mathcal{ P}(n)\), then \(\overline{\alpha} \succ \alpha \succ \underline{\alpha}\).
				\end{prop}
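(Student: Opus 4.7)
The plan is to establish $\overline{\alpha}\succ\alpha$ directly by a diagonal-counting argument, and then to deduce $\alpha\succ\underline{\alpha}$ by conjugation, using that conjugation reverses majorization and that $\delta(\alpha)=\delta(\alpha^*)$.

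For the first inequality, I would express $\sum_{i=1}^k\alpha_i$ in terms of the diagonal decomposition of the Young diagram. Let $e_{k,j}(\alpha)$ denote the number of cells on the $j$-th diagonal of $\alpha$ that lie in the first $k$ rows. Double counting gives $\sum_{i=1}^k\alpha_i=\sum_{j\ge 1}e_{k,j}(\alpha)$. Since there are only $d_j$ cells on diagonal $j$ in total, and at most $k$ of them can lie in the first $k$ rows, we have the universal upper bound $e_{k,j}(\alpha)\le \min(d_j,k)$, whence
\[
\sum_{i=1}^k\alpha_i\le \sum_{j\ge 1}\min(d_j,k)\qquad \text{for every }\alpha\in[\,d\,].
\]

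The key step is then to show that $\overline{\alpha}$ saturates this bound, i.e.\ that $e_{k,j}(\overline{\alpha})=\min(d_j,k)$. Using the explicit formula $\overline{\alpha}_i=q-i+1+\sum_{m=i}^q s_m$ from Proposition \ref{prop:s}, a case analysis identical in spirit to the one carried out in its proof shows that the $d_j$ cells on diagonal $j$ of $\overline{\alpha}$ occupy exactly rows $1,2,\ldots,d_j$. Informally, the strict decreases $\overline{\alpha}_1>\overline{\alpha}_2>\cdots>\overline{\alpha}_q$ are precisely what force every diagonal to begin in the top row, pushing all cells as high as possible. Consequently $\sum_{i=1}^k\overline{\alpha}_i=\sum_{j\ge 1}\min(d_j,k)\ge \sum_{i=1}^k\alpha_i$, which is $\overline{\alpha}\succ\alpha$.

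For the second inequality I would apply what has just been proven to the conjugate partition $\alpha^*$, which lies in $[\,d\,]$ because $\delta(\alpha^*)=\delta(\alpha)$. This yields $\overline{\alpha}\succ\alpha^*$. Conjugating both sides and invoking the classical fact that $\gamma\succ\eta\Longleftrightarrow \eta^*\succ\gamma^*$, together with the definition $\underline{\alpha}=\overline{\alpha}^*$, gives $\alpha=(\alpha^*)^*\succ\overline{\alpha}^*=\underline{\alpha}$. The main obstacle, though not deep, is the bookkeeping in the key step: verifying rigorously from the formula for $\overline{\alpha}_i$ that the diagonals really are stacked at the top of the Young diagram. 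Once this is done, the upper bound $\sum_j\min(d_j,k)$ and the symmetry under conjugation do all the remaining work.
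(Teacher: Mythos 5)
Your proposal is correct and follows essentially the same route as the paper: both reduce $\alpha\succ\underline{\alpha}$ to $\overline{\alpha}\succ\alpha^*$ via conjugation and the reversal of majorization under conjugation, and both prove $\overline{\alpha}\succ\alpha$ by observing that the strict decrease of the parts of $\overline{\alpha}$ forces every cell onto the highest available position of its diagonal, so that the first $k$ rows of $\overline{\alpha}$ contain the maximal possible number $\sum_j\min(d_j,k)$ of cells. Your diagonal-by-diagonal double count with the bound $e_{k,j}(\alpha)\le\min(d_j,k)$ is just a slightly more explicit rendering of the paper's one-line conclusion.
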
 
				
				\begin{proof}
					It is well-known (\cite{MOA} Theorem 7.B.5) that for \(\alpha,\beta \in \mathcal{P}(n)\) \(\alpha \succ \beta\) if and only if \(\beta^* \succ \alpha^*.\) Since \(\underline{\alpha} = \overline{\alpha}^*\) we only need to show \(\overline{\alpha} \succ \alpha\) for all \( \alpha \in [ \, d \, ] \) . Let \( \alpha =(\alpha_1,\alpha_2, \cdots ,\alpha_s)\), \(\overline{\alpha}=(\overline{\alpha}_1,\overline{\alpha}_2, \cdots , \overline{\alpha}_t) \in [\,d\,]\). Note that \( \alpha \in [ \, d \, ]  \) if and only if \( \alpha^* \in [ \, d \, ] \). 
					
					Notice that in general a square in the \(i\)-th row of the Young diagram of \(\alpha \in \mathcal{P}(n)\) is the \(j\)-th square of its diagonal with \(j \leq i\). 
					Since \(\overline{\alpha}_i >\overline{\alpha}_{i+1}\) it follows that  every square in the \(i\)-th row of the Young diagram of \(\overline{\alpha}\) is the \(i\)-th square of its respective diagonal. Hence \[\sum_{i=1}^k \overline{\alpha}_i \geq \sum_{i=1}^k \alpha_i \mbox{ for } 1 \leq k \leq \min \{s,t\},\]
					which is what we had to show.
				\end{proof}
				
				\medskip
				
				We can further stratify the set \([ \, d \, ] \) by the number of (non-zero) parts of the partitions and, alternatively, by the size of the largest part of \(\alpha \in [ \, d \, ] \). 
				
				Define \[[ \, d \, ] _k = [ \, d \, ]  \cap \mathcal{P}(n,k) \mbox{ and }[\,d\,]_k^* = [\,d\,] \cap \mathcal{P}(n,k)^*.\] Notice that by Proposition \ref{prop:alpha1} \([ \, d \, ] _k=[ \, d \, ] _k^*=\emptyset \) unless \( k \in A_1\). If \(d=(d_1,d_2,\cdots,d_L)\) and \(\alpha =(\alpha_1,\alpha_2, \cdots,\alpha_k) \in [ \, d \, ] _k\), then \(\alpha'=\alpha_1-1,\alpha_2-1,\cdots,\alpha_k-1 \in \mathcal{P}(n-k)\) with diagonal sequence \[ \, d'=(d_2-1,\cdots, d_k-1,d_{k+1},\cdots, d_L).\] Let \(\overline{\alpha}'=(\overline{\alpha}'_1,\overline{\alpha}'_2,\cdots,\overline{\alpha}'_k)\) be the maximal element in \([\,d'\,]\). Then \(\overline{\alpha}(k)=(\overline{\alpha}'_1+1,\overline{\alpha}'_2+1,\cdots,\overline{\alpha}'_k+1)\) is the maximal element in \([ \, d \, ] _k\). It follows that \(\overline{\alpha}(k)^*\) is the minimal element in \([ \, d \, ] _k^*\). 
				
				Now we describe how to construct the maximal element \(\overline{\gamma} =(\gamma_1,\gamma_2,\cdots ,\gamma_t) \in [\,d\,]_k^*\) for \(k \in A_1\). 
				
				Set \(\gamma_1=k\). Then \( d'=(d_2-1,\cdots, d_k-1,d_{k+1},\cdots, d_L)\) is the diagonal sequence of some partition \(\gamma' \in \mathcal{P}(n-k)\). By Proposition \ref{prop:alpha1} its largest part is restricted to a set \(A_1'\). Set \(\gamma_2\) the largest element in \(A_1'\) less than or equal to \(k\). This process continues in the obvious way and leads to the maximal element \(\overline{\gamma} \in [\,d\,]_k^*\). Hence \(\overline{\gamma}^*=\underline{\alpha}(k)\), the minimal element in \([\,d\,]_k\).

				We illustrate this construction with our example. Let \(d=(1,2,3,4,4,4,2,1)\) and \(k=6\). 
				
				Step 1: Set \(\gamma_1=6\). Then \(d'=(1,2,3,3,3,2,1)\) which implies \(A_1'=\{3,5,6,7\}\). 
				
				Step 2: Set \(\gamma_2=6\). Then \(d''=(1,2,2,2,1,1)\) which implies \(A^{(3)} = \{1,4,6\}.\)  
				
				Step 3: Set \(\gamma_3=6\). Then \(d^{(3)}=(1,1,1)\) which implies \(A_1^{(4)}=\{3\}\). 
				
				Step 4: Set \(\gamma_4=3\) and the process ends.
				
				We get \(\overline{\gamma}=(6,6,6,3)\) and hence \(\underline{\alpha}(6)=(4,4,4,3,3,3).\) 
				
				%\textcolor{red}{Needs work!!!!!!!!}
				
				\section{The cardinality of \([\,d\,]\)} \label{sec:main}

				\textbf{Definition} \textit{Let \(M=\{0^{(b_0)}, 1^{(b_1)}, \cdots ,t^{(b_t)} \} \) be a multiset with \(b_i\) elements equal to \(i\). Let \(b= b_0+b_1+\cdots +b_t\). A vn-arrangement of the elements of \(M\) is a sequence \((v_1,v_2,\cdots,v_b)\) such that \(v_{i+1}-v_i \leq 1\) for \(0 \leq i <b\). }
				
				\begin{prop} \label{prop:vn-arrangements}
					The number of vn-arrangements of \(M\) is \(\prod_{i=0}^{t-1} {b_i+b_{i+1} \choose b_i}\).
				\end{prop}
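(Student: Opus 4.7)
The plan is to prove the formula by induction on $t$, removing the maximum value $t$ at each step and showing that the multiplicative factor contributed is exactly $\binom{b_{t-1}+b_t}{b_t}$. The base case $t=0$ is trivial: there is one arrangement (all zeros) and the empty product equals $1$.

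The key observation I would establish first is a local constraint: in any vn-arrangement of $M$, an occurrence of $t$ at position $j+1$ forces $v_j \geq t-1$, so $v_j \in \{t-1,t\}$. Consequently, each maximal run of $t$'s is either at the very start of the sequence or immediately preceded by a $(t-1)$. Using this, I would verify that deleting all $t$'s from a vn-arrangement of $M$ yields a vn-arrangement of $M'=\{0^{(b_0)},\ldots,(t-1)^{(b_{t-1})}\}$. The only nontrivial point is the difference across a deleted block: if $\ldots,a,t,\ldots,t,x,\ldots$ collapses to $\ldots,a,x,\ldots$, then $a=t-1$ by the constraint, while $x\leq t-1$ since $x\in M'$, so $x-a\leq 0\leq 1$.

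Next, I would count the preimages of a fixed vn-arrangement $v'$ of $M'$ under this deletion. A preimage is determined by how the $b_t$ copies of $t$ are distributed among the $b_{t-1}+1$ admissible insertion slots: one slot before $v'$ and one slot immediately after each of the $b_{t-1}$ copies of $t-1$ in $v'$. Any such distribution yields a valid vn-arrangement, as one checks directly: within a run of $t$'s the difference is $0$; at the left boundary of a run the preceding element is $t-1$ (difference $1$) or the run begins the sequence (no constraint); at the right boundary the following element lies in $\{0,\ldots,t-1\}$, giving difference $\leq -1$. The number of ways to put $b_t$ indistinguishable items into $b_{t-1}+1$ distinguishable slots is $\binom{b_t+b_{t-1}}{b_t}=\binom{b_{t-1}+b_t}{b_{t-1}}$.

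Combining the deletion map with the counting of preimages gives
\[
N(b_0,\ldots,b_t)=\binom{b_{t-1}+b_t}{b_{t-1}}\cdot N(b_0,\ldots,b_{t-1}),
\]
and the induction closes. The main obstacle is really the opening local lemma — pinning down exactly where a $t$ can sit — because once that is in hand both the restriction map and the insertion count are essentially bookkeeping; a minor subtlety to handle is the degenerate cases $b_t=0$ or $b_{t-1}=0$, both of which the formula $\binom{b_{t-1}+b_t}{b_t}$ handles correctly (yielding $1$ in each case, matching the fact that the $t$'s, if any, must all form a single block at the start).
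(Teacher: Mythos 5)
Your argument is correct and is essentially the paper's own proof: both proceed by induction on $t$, observing that the copies of the top value can only sit at the very beginning or immediately after a copy of the next value down, so they distribute into $b_{t-1}+1$ slots in $\binom{b_{t-1}+b_t}{b_t}$ ways. Your write-up is somewhat more careful than the paper's in that it verifies both directions of the deletion/insertion correspondence, but the underlying idea is identical.
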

				
				\begin{proof}
					For \(t=0\) there is nothing to prove. When \(t=1\), any arrangement of the \(b_0\) elements equal to 0 and the \(b_1\) elements equal to 1 are vn-arrangements. There are \({b_0+b_1 \choose b_0}\) such sequences. So assume the results holds for \(t \geq 1\). Let \( M=\{0^{(b_0)}, 1^{(b_1)}, \cdots ,t^{(b_t)},(t+1)^{(b_{t+1})} \}\). Any vn-arrangement of \(M\) arises from a vn-arrangement of \(M'=\{0^{(b_0)},1^{(b_1)}, \cdots ,t^{(b_t)}\}\) by adding any number of elements equal to \(t+1\) at the very beginning of the arrangement or after an element equal to \(t\). Since there are \(b_{t+1}\) elements equal to \(t+1\), which can be placed in \(b_t+1\) buckets, there are \({b_t+b_{t+1} \choose b_t}\) vn-arrangements of \(M\) for every vn-arrangement of \(M'\). By induction, the results holds.    
				\end{proof}
				
				The result generalizes and holds for any finite multiset \( \{a^{(b_0)}, (a+1)^{(b_1)}, \cdots ,(a+t)^{(b_t)} \} =a+M\), \(a \in \mathbb{Z}\). It is worth pointing out the special case \(b_i=1\) for all \(1 \leq i \leq t\).
				
				\begin{cor}
					If \(M\) is any set of consecutive integers, then the number of vn-arrangements of \(M\) is \(2^{t-1}\). 
				\end{cor}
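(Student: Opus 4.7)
The corollary is an immediate specialization of Proposition~\ref{prop:vn-arrangements}, combined with the translation-invariance remark made just before the corollary's statement. The plan is therefore to reduce to the multiset $\{0,1,\ldots,t-1\}$ and then substitute into the product formula.

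First, I would observe that the vn-arrangement condition $v_{i+1}-v_i\le 1$ is preserved under translating every entry by a common integer $a$. Consequently, any set of $t$ consecutive integers can, up to such a translation, be written as the multiset $M=\{0^{(1)},1^{(1)},\ldots,(t-1)^{(1)}\}$, and this translation does not change the number of vn-arrangements. This is exactly the special case of Proposition~\ref{prop:vn-arrangements} in which every multiplicity equals $1$ and the top index is $t-1$.

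Second, I would substitute $b_0=b_1=\cdots=b_{t-1}=1$ into the product of Proposition~\ref{prop:vn-arrangements}. The resulting product
\[
\prod_{i=0}^{(t-1)-1}\binom{b_i+b_{i+1}}{b_i}
\]
has $t-1$ factors, each equal to $\binom{2}{1}=2$, which yields $2^{t-1}$.

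The only point requiring any care is bookkeeping: aligning the corollary's use of $t$ (for the number of consecutive integers, i.e.\ $|M|$) with the proposition's use of $t$ (for the largest index of a multiset indexed from $0$, hence one less than the number of distinct values). Once this indexing is reconciled, the corollary follows with no further calculation and no genuine obstacle.
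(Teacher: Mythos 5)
Your proposal is correct and is exactly the argument the paper intends: the corollary is stated without proof as an immediate specialization of Proposition \ref{prop:vn-arrangements} (via the translation remark) to the case where all multiplicities equal $1$, and your substitution giving $t-1$ factors of $\binom{2}{1}$ is right. The only nontrivial point — reconciling the corollary's $t=|M|$ with the proposition's $t$ as the top index — is one you identify and handle correctly.
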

				
				%\begin{proof}
				%    If \(b_i=1\) for all \(1 \leq i \leq b\), then  \({b_i+b_{i+1} \choose b_i} =2\) %for all \(1 \leq i <b\). 
				%\end{proof}
				
				The idea of vn-arrangements is related to the beautiful theory of counting sequences according to rises and falls, which was developed by Carlitz and others. E.g., see \cite{C} and \cite{A}. As an aside, we can extend the definition of vn-arrangement as follows. 
				
				\textbf{Definition:} \textit{Let \(M=\{0^{(b_0)}, 1^{(b_1)},2^{(b_2)}, \cdots ,t^{(b_t)} \} \) be a multiset with \(b_i\) elements equal to \(i\). Let \(b= b_0+b_1+\cdots +b_t\). A k-vn-arrangement, \(0 \leq k \leq t\), of the elements of \(M\) is a sequence \((v_1,v_2,\cdots,v_b)\) such that \(v_{i+1}-v_i \leq k\) for \(1 \leq i <b\). }
				
				There is an explicit formula for the number of k-vn-arrangments of a given multiset. We adopt the convention that an empty product has value 1.  
				
				\begin{prop}
					If \(M=\{0^{(b_0)}, 1^{(b_1)},2^{(b_2)}, \cdots ,t^{(b_t)} \} \) be a multiset with \(b_i\) elements equal to \(i\). 
					The number of k-vn-arrangements of \(M\) is \( {b_0+b_1+\cdots +b_k \choose b_0,b_1,\cdots, b_k}  \,\left(\prod_{i=1}^{t-k} {b_i+b_{i+1}+\cdots + b_{i+k} \choose b_{i+k}} \right).\)
				\end{prop}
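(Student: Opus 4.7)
The plan is to prove this by induction on $t$, extending the insertion argument from Proposition \ref{prop:vn-arrangements}. I will treat $k$ as fixed.

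For the base case $t=k$, the constraint $v_{i+1}-v_{i}\leq k$ is automatic because any two elements of $M$ differ by at most $t=k$. So every permutation of the multiset is a $k$-vn-arrangement, and the count is the multinomial coefficient $\binom{b_0+b_1+\cdots+b_k}{b_0,b_1,\ldots,b_k}$. This matches the formula, whose product $\prod_{i=1}^{0}$ is empty and equal to $1$.

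For the induction step, suppose the result holds for some $t\geq k$ and let $M=\{0^{(b_0)},1^{(b_1)},\ldots,(t+1)^{(b_{t+1})}\}$, $M'=M\setminus\{(t+1)^{(b_{t+1})}\}$. I would show that every $k$-vn-arrangement of $M$ is obtained uniquely by starting with a $k$-vn-arrangement of $M'$ and inserting the $b_{t+1}$ copies of $t+1$ into admissible slots. A slot is admissible if and only if (a) it is at the very beginning of the arrangement, or (b) it immediately follows an entry of value at least $t+1-k$. Indeed, inserting $t+1$ after a value $v$ produces the local jump $(t+1)-v$, which is $\leq k$ precisely when $v\geq t+1-k$; the jump from $t+1$ to the next entry is automatic since no entry of $M'$ exceeds $t$, and several $t+1$'s may be stacked in the same slot because $(t+1)-(t+1)=0\leq k$. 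Conversely, deleting all the $t+1$'s from a $k$-vn-arrangement of $M$ leaves a $k$-vn-arrangement of $M'$, since the only differences that change are those across removed entries, where the new difference between $v$ (left of a removed block) and $w$ (right of the block) satisfies $w-v\leq w-0\leq t\leq (t+1)-(t+1-k)\leq k$ when $w\leq t$, and in general $w-v\leq w\leq t$; this verifies that removal is well-defined.

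The number of admissible slots in an arrangement of $M'$ is $1+b_{t+1-k}+b_{t+2-k}+\cdots+b_t$, and the stars-and-bars count for distributing the $b_{t+1}$ indistinguishable copies of $t+1$ among them is
\[
\binom{b_{t+1}+b_{t+1-k}+b_{t+2-k}+\cdots+b_t}{b_{t+1}}=\binom{b_{t+1-k}+\cdots+b_{t+1}}{b_{t+1}},
\]
which is exactly the new $i=t+1-k$ factor required by the formula. Multiplying the inductive count for $M'$ by this factor yields the claimed formula for $M$, completing the induction.

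The main subtlety is verifying the bijection between $k$-vn-arrangements of $M$ and (arrangement of $M'$, insertion data) pairs, in particular checking that the upward-jump condition is preserved both when inserting and when deleting blocks of $t+1$'s. Once this is established, the stars-and-bars step and the bookkeeping of the product are routine.
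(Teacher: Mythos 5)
Your proof follows essentially the same route as the paper's: induction on $t$ with base case $t=k$ (where every permutation works and the count is the multinomial coefficient), realizing each $k$-vn-arrangement of $M$ by inserting the $b_{t+1}$ copies of $t+1$ into the $1+b_{t+1-k}+\cdots+b_t$ admissible slots of an arrangement of $M'$ and counting by stars and bars; you are in fact more careful than the paper about the slot count and the insertion/deletion bijection. One small slip to fix: in checking that deleting a block of $(t+1)$'s leaves a valid arrangement, your chain $w-v\le w\le t\le (t+1)-(t+1-k)$ asserts $t\le k$, which is false for $t>k$; the correct bound is $w-v\le t-(t+1-k)=k-1\le k$, using $v\ge t+1-k$, which holds because the slot occupied by the deleted block was admissible.
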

				
				\begin{proof}
					For \(t < k\), there are no constraints on the arrangements. The number of arrangement is the multinomial coefficient. So assume now \(t \geq k\).  
					
					For \(k=0\) there is only one arrangement, \((t^{(b_t)},(t-1)^{(b_{t-1})}, \cdots 1^{(b_1)}, \cdots ,0^{(b_0)})\) and the result holds. For \(k=1\) we get the result of Proposition \ref{prop:vn-arrangements}. Now assume \(k \geq 2\). We proceed by induction on \(t \geq k\). If \(t=k\), the results holds (with the convention that an empty product is equal to 1). Now assume the result holds for some \(t \geq k\). Let \( M=\{0^{(b_0)}, 1^{(b_1)}, \cdots , t^{(b_t)},(t+1)^{(b_{t+1})} \}\). Any k-vn-arrangement of \(M\) arises from a k-vn-arrangement of \(M'=\{0^{(b_0)},1^{(b_1)}, \cdots ,t^{(b_t)}\}\) by adding any number of elements equal to \(t+1\) at the very beginning or after an element equal to \(t+1-k,t+2-k,\cdots ,t\). Since there are \(b_{t+1}\)  elements equal to \(t+1\), which can be placed in \(b_{t+1-k}+b_{t+2-k}+ \cdots + b_{t}\) buckets, the result follows by induction.    
				\end{proof}
				
				For a partition \(\alpha=(\alpha_1,...,\alpha_t) \in \mathcal{ P}(n)\) define \(v(\alpha)=(v_1,v_2,\cdots,v_t)\) where \(v_i=\alpha_i+i-1\) for \(1 \leq i \leq t\). The definition of \(\delta(\alpha)=(d_k)_{k \geq 1}\) can now be restated as \begin{equation} \label{eqn:vi} d_k = \big\lvert \{ i \, \rvert \, 1 \leq i \leq k \mbox{ and  } v_i \geq k \}  \big\rvert.
				\end{equation} 
				Since \(v_i=\alpha_i+i-1\) we have \(v_i \geq i\) for \(1 \leq i \leq t\) and since \(\alpha\) is a non-increasing sequence we have \(v_{i+1}-v_i \leq 1\) for \(1 \leq i < L\). In the special case of \(\overline{\alpha}\) we have \(\overline{v}_i = q+\sum_{k=i}^{q} s_k\) for \(1 \leq i \leq q\). In particular, \(\overline{v_1}=q+\sum_{i=1}^q s_i=L\) and \(\overline{v}_q =q+s_q=l\).  
				
				In what follows we assume \(\delta=(1,2,\cdots,q,d_{q+1},d_{q+2}, \cdots, d_L) \in \Delta(n)\), \(q \geq d_{q+1}\). Set \(b_i=d_i-d_{i+1}\) for \(q \leq i <L\) and \(b_L=d_L\). By Proposition \ref{prop:alpha1}, if \(\alpha \in [ \, d \, ] \), then \(\alpha \in \mathcal{P}(n,k)\) for some \(k \in A_1\). Let \([\,d\,]_k= [\,d\,] \cap \mathcal{P}(n,k)\). Our first result counts the number of elements in \([ \, d \, ] _q\), i.e., those partitions in \(\mathcal{P}(n)\) that have exactly \(q\) parts. 
				
				\begin{prop} \label{prop:lengthq}
					\begin{equation}
						\big\rvert \, [ \,d \, ]_q| = \big\rvert \, [ \, d \, ]  \cap \, \mathcal{P}(n,q) \, \big\rvert= \prod_{i=q}^{L-1} {b_i+b_{i+1} \choose b_i}. 
					\end{equation}
				\end{prop}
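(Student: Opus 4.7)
The plan is to reformulate the problem as counting vn-arrangements, then invoke Proposition \ref{prop:vn-arrangements}. First I would use the substitution $v_i = \alpha_i + i - 1$. For a partition $\alpha=(\alpha_1,\ldots,\alpha_q) \in \mathcal{P}(n,q)$, the conditions $\alpha_i \geq 1$ and $\alpha_i \geq \alpha_{i+1}$ translate into $v_i \geq i$ and $v_{i+1} - v_i \leq 1$; conversely any integer sequence $(v_1,\ldots,v_q)$ with these two properties comes from a unique partition in $\mathcal{P}(n,q)$. So $[\,d\,]_q$ is in bijection with the set of such sequences satisfying the additional requirement $\delta(\alpha)=d$.

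Next I would translate the diagonal constraint $\delta(\alpha)=d$ into a constraint on the multiset $\{v_1,\ldots,v_q\}$. The initial run $d_k=k$ for $1 \leq k \leq q$ is equivalent (via the definition in \eqref{eqn:vi}) to $v_i \geq q$ for $1 \leq i \leq q$; in particular $v_i \geq i$ becomes automatic. For $k \geq q$, the index restriction $1 \leq i \leq k$ in \eqref{eqn:vi} is automatically satisfied by every $i \leq q$, so $d_k = |\{i : v_i \geq k\}|$. Taking differences gives $b_k = d_k - d_{k+1} = |\{i : v_i = k\}|$ for $q \leq k < L$, and $b_L = d_L = |\{i : v_i = L\}|$. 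The telescoping sum $\sum_{i=q}^L b_i = d_q = q$ confirms the multiset has exactly $q$ elements. Hence the multiset $\{v_1,\ldots,v_q\}$ is forced to be $M = \{q^{(b_q)}, (q+1)^{(b_{q+1})}, \ldots, L^{(b_L)}\}$.

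The counting problem therefore reduces to: how many sequences $(v_1,\ldots,v_q)$ have multiset $M$ and satisfy $v_{i+1}-v_i \leq 1$? This is exactly the number of vn-arrangements of $M$ (in the shifted sense noted after Proposition \ref{prop:vn-arrangements}, since $M = q + \{0^{(b_q)}, 1^{(b_{q+1})}, \ldots, (L-q)^{(b_L)}\}$). Applying that proposition yields
\[
|[\,d\,]_q| = \prod_{i=q}^{L-1} \binom{b_i+b_{i+1}}{b_i},
\]
which is the claim.

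The main subtlety is the boundary at $k=q$: one must verify that the initial segment $d_1,\ldots,d_q = 1,\ldots,q$ is automatically realized by any sequence satisfying $v_i \geq q$ and the vn-condition, so that it imposes no constraint beyond $v_i \geq q$. This is routine from \eqref{eqn:vi}. The other point worth checking carefully is that the inequality $v_i \geq i$ (needed for $\alpha_i \geq 1$) is subsumed by $v_i \geq q \geq i$, which is why restricting to partitions with exactly $q$ parts makes the clean multinomial-type count work.
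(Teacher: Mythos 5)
Your proof is correct and follows essentially the same route as the paper: pass to $v_i=\alpha_i+i-1$, show the multiset $\{v_1,\dots,v_q\}$ is forced to be $\{q^{(b_q)},\dots,L^{(b_L)}\}$, and count vn-arrangements via Proposition \ref{prop:vn-arrangements}. The only (minor, and arguably cleaner) difference is that you pin down the multiset directly from the differences $b_k=d_k-d_{k+1}=\lvert\{i : v_i=k\}\rvert$, whereas the paper identifies it with $\{\overline{v}_1,\dots,\overline{v}_q\}$ by comparison with the maximal element $\overline{\alpha}$; you are also more explicit about the reverse direction of the bijection.
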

				
				\begin{proof}
					Let \(\alpha = (\alpha_1,\alpha_2,\cdots ,\alpha_q) \in [d]_q\) and \(v(\alpha)=(\alpha_1,\alpha_2+1,\cdots,\alpha_q+q-1)\). We will show that \(v(\alpha)\) is a vn-arrangement of the multiset \(\{\overline{v}_1,\overline{v}_2,\cdots, \overline{v}_q\}.\) Notice that \(v_{i+1}-v_i \leq 1\) since \(\alpha_i \leq \alpha_{i+1}\) for all \(1 \leq i < q\), i.e., \(v(\alpha)\) is a vn-arrangement. Since \(\alpha,\overline{\alpha} \in [ \, d \, ] \) Equation \ref{eqn:vi} implies that 
					\begin{equation}
						d_k = \big\lvert \{ i \, \rvert \, 1 \leq i \leq k \mbox{ and  } v_i \geq k \}  \big\rvert= \big\lvert \{ j \, \rvert \, 1 \leq j \leq k \mbox{ and  } \overline{v_j} \geq k \}  \big\rvert \mbox{ for all } q \leq k.
					\end{equation}
					Since \(d_k =0\) for \(k >L\) and \(\sum_{i=1}^q v_i = \sum_{j=1}^q \overline{v}_j\) we conclude that \(\{v_1,v_2,\cdots, v_q\} = \{ \overline{v}_1,\overline{v}_2,\cdots,\overline{v}_q\}\) as multisets. 
					
					Writing the elements of \(\overline{v}\) as a multiset of consecutive integers we have \[M=\{q^{b_q}, \cdots, l^{b_l},(l+1)^{b_{l+1}}, \cdots, L^{b_L}\}.\] By Proposition \ref{prop:alpha1}, \(b_i \neq 0\) if and only if \(i \in A_1\). The result now follows from Proposition \ref{prop:vn-arrangements}.
				\end{proof}

				%\begin{prop} \label{prop:lengthq+1}
				%    \begin{equation}
					%      | \, [ \, d \, ]  \cap \, \mathcal{P}(n,q+1) \,|= \dfrac{b_{q+1}}{b_q+1} %\prod_{i=q}^{L-1} {b_i+b_{i+1} \choose b_i}. 
					%    \end{equation}
				%\end{prop}
				
				The expression for the cardinalities of \(\big\rvert \, [ \, d \, ]_k \big\rvert \) when \(k >q\) are more complicated. 
				
				\begin{prop} \label{prop:lengthq+2} 
					If \(k > q\), then 
					\begin{equation} \label{eqn:lengthk}   
						\big\rvert \, [ \, d \, ]_k \, \big\rvert = \big\rvert \, [ \, d \, ]  \cap \, \mathcal{P}(n,k) \, \big\rvert= {b_{k-1}+b_k \choose b_{k-1}+1}\prod_{i=q}^{k-2} {b_i+b_{i+1}+1 \choose b_i+1} \prod_{i=k}^{L-1} {b_i+b_{i+1} \choose b_i}  
					\end{equation}
				\end{prop}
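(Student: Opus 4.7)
The plan is to extend the bijection used in Proposition \ref{prop:lengthq} to the case $k > q$. For $\alpha = (\alpha_1,\ldots,\alpha_k) \in [\,d\,]_k$, the sequence $v(\alpha) = (\alpha_1, \alpha_2 + 1, \ldots, \alpha_k + k - 1)$ is a vn-arrangement since $v_{i+1} - v_i = \alpha_{i+1} - \alpha_i + 1 \leq 1$, with the additional constraint $v_k \geq k$ encoding $\alpha_k \geq 1$. The first key step is to identify the underlying multiset: because every row index $i$ with $j < i \leq k$ automatically satisfies $v_i = \alpha_i + i - 1 > j$, one has $|\{i : v_i \geq j\}| = d_j + \max(0, k - j)$; taking successive differences yields $|\{i : v_i = j\}| = b_j + 1$ for $q \leq j < k$ and $= b_j$ for $k \leq j \leq L$, so $v(\alpha)$ is a vn-arrangement of
\[ M_k = \{q^{b_q+1}, (q+1)^{b_{q+1}+1}, \ldots, (k-1)^{b_{k-1}+1}, k^{b_k}, \ldots, L^{b_L}\}. \]
Conversely, any vn-arrangement of $M_k$ with $v_k \geq k$ yields a valid partition via $\alpha_i = v_i - i + 1$: the vn-constraint implies $v_i \geq v_k - (k-i) \geq i$, so $\alpha_i \geq 1$. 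Thus $|[\,d\,]_k|$ equals the number of vn-arrangements of $M_k$ with last entry at least $k$.

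Next I would count these arrangements by adapting the inductive construction of Proposition \ref{prop:vn-arrangements}, but inserting from \emph{largest to smallest}. Start with the arrangement consisting of the $c_L = b_L$ copies of $L$, and for $j = L-1, L-2, \ldots, q$ insert the $c_j$ copies of $j$ (where $c_j = b_j + 1$ if $j < k$ and $c_j = b_j$ otherwise). The symmetric adjacency check shows that each copy of $j$ must go either immediately before one of the $c_{j+1}$ existing copies of $j+1$, or at the very end of the current arrangement, giving $c_{j+1} + 1$ ``buckets'' and $\binom{c_j + c_{j+1}}{c_j}$ unconstrained distributions.

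The decisive observation is that after all insertions, the last element of the final arrangement equals the smallest value $j$ at which some copy landed in the end bucket (and equals $L$ if no end insertion ever occurred). Consequently, the constraint $v_k \geq k$ is equivalent to forbidding end placements at every level $j < k$. For $q \leq j \leq k - 1$ the $c_j$ copies must be distributed among only the $c_{j+1}$ non-end buckets, contributing $\binom{c_j + c_{j+1} - 1}{c_j}$ arrangements; for $k \leq j \leq L - 1$ all $c_{j+1} + 1$ buckets remain available, contributing $\binom{c_j + c_{j+1}}{c_j}$. Substituting $c_j = b_j + 1$ for $j < k$ and $c_j = b_j$ for $j \geq k$ collapses the product into the three factors of the stated formula, with the boundary level $j = k-1$ producing $\binom{b_{k-1} + b_k}{b_{k-1}+1}$.

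The main subtlety is the end-bucket tracking: precisely verifying that the last element of the growing arrangement is indeed controlled by which levels received end placements, and that the insertion rule remains valid as the sequence evolves. Edge cases such as $k \notin A_1$, where $b_k = 0$ forces $\binom{b_{k-1} + b_k}{b_{k-1}+1} = 0$, provide a consistency check with the known fact that $[\,d\,]_k = \emptyset$ in those cases.
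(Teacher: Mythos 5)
Your proof is correct, but it takes a genuinely different route from the paper. The paper proves this proposition by induction on \(n\): it strips the first column from the Young diagram, observes that \([\,d\,]_k\) is in bijection with \(\bigcup_{l\le k}[\,d'\,]_l\) for the reduced diagonal sequence \(d'\), and then telescopes the resulting sum of binomial products. You instead give a direct bijective count: you show \([\,d\,]_k\) corresponds to vn-arrangements of the explicit multiset \(M_k\) (with multiplicities \(b_j+1\) below \(k\) and \(b_j\) from \(k\) on) subject to the single constraint that the last entry be at least \(k\), and you count these by inserting values from largest to smallest, forbidding the ``end bucket'' at levels \(j<k\). I checked the three load-bearing claims — the multiplicity computation \(|\{i: v_i=j\}|=b_j+\mathds{1}_{j<k}\), the reverse insertion lemma (runs of the current minimum \(j\) must terminate immediately before a copy of \(j+1\) or at the very end, giving \(\binom{c_j+c_{j+1}}{c_j}\) placements), and the tracking of the final entry as the smallest level at which the end bucket is used — and all hold; the product \(\prod_{j=q}^{k-1}\binom{c_j+c_{j+1}-1}{c_j}\prod_{j=k}^{L-1}\binom{c_j+c_{j+1}}{c_j}\) does collapse to the stated formula, with the boundary factor \(\binom{b_{k-1}+b_k}{b_{k-1}+1}\) correctly vanishing when \(b_k=0\), i.e.\ when \(k\notin A_1\). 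Your approach buys a transparent combinatorial meaning for each binomial factor and avoids the paper's induction on \(n\) together with the somewhat delicate bookkeeping of how the differences \(b_i\) transform into \(c_i\) under column removal; the paper's approach is more uniform with its derivation of Theorem \ref{theorem:anylength}, which it obtains by the same telescoping device. If you write this up, spell out the end-bucket tracking argument in full (you correctly flag it as the main subtlety), since it is the one step that is not a verbatim transcription of Proposition \ref{prop:vn-arrangements}.
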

				
				As usual, an empty product has value 1. Notice that for \(b_k \neq 0\) the binomial coefficient \({b_{k-1}+b_k \choose b_{k-1}+1} = 0\). Hence \(\big\rvert \, [ \, d \, ]_k \, \big\rvert \neq 0 \) if and only if \(k \in A_1\).   
				
				\begin{proof} 
					We proceed by induction on \(n\). The result is straightforward to verify for small values of \(n\). Let \(d=(1,2, \cdots q,d_{q+1} \cdots, d_L)\), be a diagonal sequence of some partition \(\alpha \in \mathcal{P}(n)\) with \(k \in A_1\). If \(k \not\in A_1\), there is nothing to prove. Since \(k \in A_1\), there exists a partition \(\alpha =(\alpha_1,\cdots \alpha_k)\in \mathcal{P}(n)\) with \(\alpha_k >0\) and \(\delta(\alpha)=d\). Hence the partition \(\alpha'=(\alpha_1-1\,\cdots \alpha_k-1) \in \mathcal{P}(n-k)\) and \(\delta(\alpha')=d'=(d_i')_{1 \leq i \leq L-1}=(1,2,\cdots, q-1,d_q-1,\cdots, d_{k-1}-1,d_{k},\cdots, d_{L-1})\). 
					
					If \(\beta=(\beta_1,\cdots \beta_l) \in [\,d'\,]_l\) for some \(l \leq k\), then \((\beta_1+1,\cdots \beta_l+1,1^{(k-l)}) \in [\,d\,]_k\) and vice versa. Hence \[ \big\rvert \, [\, d\,]_k \, \big\rvert = \sum_{l=q-1}^k \big\rvert \, [ \, d' \, ]_l \, \big\rvert. \]
					
					We observe that for \(i \geq k-1\), \(c_{k-1}=d_k'-d_{k-1}'=d_{k+1}-d_k-1=b_k-1\) while for \(i \neq k-1\), \(c_i = d_{i+1}'-d_i'=d_{i+2}-d_{i+1}=b_{i+1}.\)
					
					By induction \[ \big\rvert \, [ \, d' \, ]_q \, \big\rvert =  \prod_{i=q-1}^{L-2} {c_i+c_{i+1} \choose c_i}\] and 
					\[ \big\rvert \, [ \, d' \, ]_l \, \big\rvert = {c_{l-1}+c_l \choose c_{l-1}+1}\prod_{i=q-1}^{l-3} {c_i+c_{i+1}+1 \choose c_i+1} \prod_{i=l-1}^{L-2} {c_i+c_{i+1} \choose c_i}\] for \(l >q-1\). 
					
					Notice that \begin{align*} \big\rvert \, [ \, d' \, ]_{q-1} \, \big\rvert + \big\rvert \, [ \, d' \, ]_{q} \, \big\rvert & =\prod_{i=q-1}^{L-2} {c_i+c_{i+1} \choose c_i}+{c_{q-1}+c_{q} \choose c_{q-1}+1}\prod_{i=q-1}^{q-2} {c_i+c_{i+1}+1 \choose c_i+1} \prod_{i=q}^{L-2} {c_i+c_{i+1} \choose c_i} \\ & =
						{c_{q-1}+c_{q}+1 \choose c_{q-1}+1} \prod_{i=q}^{L-2} {c_i+c_{i+1} \choose c_i} \end{align*} and by induction \begin{align*} \big\rvert \, [\, d\,]_k \, \big\rvert = \sum_{l=q-1}^{k} \big\rvert \, [ \, d' \, ]_l \, \big\rvert & = \prod_{i=q}^{k}{c_{i-1}+c_{i}+1 \choose c_{i-1}+1} \prod_{i=k}^{L-2} {c_i+c_{i+1} \choose c_i} \\ & =  {b_{k-1}+b_k \choose b_{k-1}+1}\prod_{i=q}^{k-2} {b_i+b_{i+1}+1 \choose b_i+1} \prod_{i=k}^{L-1} {b_i+b_{i+1} \choose b_i}.  
					\end{align*}
				\end{proof}
				
				The cardinality of \([ \, d \,]\) now follows by addition of the cardinalities of \( [\,d\,]_k\) for \(q \leq k \leq L\) with a similar and somwhat simpler argument.
				
				\begin{thm} \label{theorem:anylength}
					Assume \(\delta=(1,2,\cdots,q,d_{q+1},d_{q+2}, \cdots, d_L) \in d(n)\). Set \(b_i=d_i-d_{i+1}\) for \(q \leq i <L\) and \(b_L=d_L\). The number of partitions \(\alpha \in \mathcal{ P}(n)\) with \(d(\alpha)=d\) is 
					\begin{equation} \label{eqn:anylength}
						\big\rvert \, [ \,d\,]\, \big\rvert= \prod_{i=q}^{L-1} {b_i+b_{i+1}+1 \choose b_i+1}. 
					\end{equation}
				\end{thm}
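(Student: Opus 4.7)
The plan is to sum the cardinalities $|[d]_k|$ over all admissible lengths $k$, using Propositions \ref{prop:lengthq} and \ref{prop:lengthq+2}, and to show that the resulting sum telescopes via Pascal's identity into the claimed product.

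First I would note that every $\alpha \in [d]$ has between $q$ and $L$ parts: the maximal element $\overline{\alpha}$ has exactly $q$ parts (Proposition \ref{prop:s}) and the minimal element $\underline{\alpha}$ has exactly $L = q + \sum_{i=1}^q s_i$ parts (Proposition \ref{prop:alphaunderbar}), so the majorization bounds of Section \ref{section:majorization} confine the length of any $\alpha \in [d]$ to $\{q, q+1, \ldots, L\}$. Hence
\begin{equation*}
\bigl|\,[d]\,\bigr| \;=\; \sum_{k=q}^{L} \bigl|\,[d]_k\,\bigr|.
\end{equation*}

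The core of the argument is the following identity for partial sums, proved by induction on $m$ for $q \leq m \leq L$:
\begin{equation*}
S_m \;:=\; \sum_{k=q}^{m} \bigl|\,[d]_k\,\bigr| \;=\; \prod_{i=q}^{m-1} \binom{b_i+b_{i+1}+1}{b_i+1} \,\prod_{i=m}^{L-1} \binom{b_i+b_{i+1}}{b_i}.
\end{equation*}
The base case $m=q$ is precisely Proposition \ref{prop:lengthq} (the first product is empty). For the inductive step $m \to m+1$, substitute the formula of Proposition \ref{prop:lengthq+2} for $|[d]_{m+1}|$ and factor out the common product $\prod_{i=q}^{m-1} \binom{b_i+b_{i+1}+1}{b_i+1}\prod_{i=m+1}^{L-1} \binom{b_i+b_{i+1}}{b_i}$ from $S_m + |[d]_{m+1}|$. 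What remains is the two-term sum
\begin{equation*}
\binom{b_m+b_{m+1}}{b_m} + \binom{b_m+b_{m+1}}{b_m+1} \;=\; \binom{b_m+b_{m+1}+1}{b_m+1},
\end{equation*}
which is Pascal's identity, and this yields exactly the formula for $S_{m+1}$. Setting $m = L$ empties the second product and gives $|[d]| = S_L = \prod_{i=q}^{L-1}\binom{b_i+b_{i+1}+1}{b_i+1}$.

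The main obstacle I foresee is purely bookkeeping: one must keep the index ranges of the two products aligned across the step and be careful with the empty-product convention at the endpoints $m=q$ and $m=L$. No genuinely new idea is required beyond Pascal's identity, which is consistent with the author's preceding remark that the argument is ``similar and somewhat simpler'' than the proof of Proposition \ref{prop:lengthq+2}.
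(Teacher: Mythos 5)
Your proof is correct and takes essentially the same route as the paper: both decompose \([\,d\,]\) as \(\bigcup_{k=q}^{L}[\,d\,]_k\), substitute Propositions \ref{prop:lengthq} and \ref{prop:lengthq+2}, and collapse the sum term by term via Pascal's identity \({b_m+b_{m+1}\choose b_m}+{b_m+b_{m+1}\choose b_m+1}={b_m+b_{m+1}+1\choose b_m+1}\). Your explicit partial-sum invariant \(S_m\) merely spells out the induction that the paper performs on its first two terms and then leaves implicit.
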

				
				\begin{proof} Clearly \(\big\rvert \, [ \, d \, ] \, \big\rvert =        \sum_{k=q}^L \big\rvert \, [ \,d \,]_k \, \big\rvert\) and         \[\sum_{k=q}^L \big\rvert \, [ \,d \,]_k \, \big\rvert  = 
					\prod_{i=q}^{L-1} {b_i+b_{i+1} \choose b_i}+\sum_{k=q+1}^L {b_{k-1}+b_k \choose b_{k-1}+1}\prod_{i=q}^{k-2} {b_i+b_{i+1}+1 \choose b_i+1} \prod_{i=k}^{L-1} {b_i+b_{i+1} \choose b_i}.\] Notice that \begin{align*} \big\rvert \, [ \, d \, ]_q \, \big\rvert + \big\rvert \, [ \, d \, ]_{q+1} \, \big\rvert & =\prod_{i=q}^{L-1} {b_i+b_{i+1} \choose b_i}+{b_{q}+b_{q+1} \choose b_{q}+1}\prod_{i=q}^{q-1} {b_i+b_{i+1}+1 \choose b_i+1} \prod_{i=q+1}^{L-1} {b_i+b_{i+1} \choose b_i} \\ & =
						{b_{q}+b_{q+1}+1 \choose b_{q}+1} \prod_{i=q+1}^{L-1} {b_i+b_{i+1} \choose b_i} 
					\end{align*}
					and by induction \begin{align*} \big\rvert \, [ \, d \, ] \, \big\rvert =    
						\sum_{k=q}^L \big\rvert \, [ \,d \,]_k \, \big\rvert = \prod_{i=q}^{L-1} {b_i+b_{i+1}+1 \choose b_i+1}.
					\end{align*}
				\end{proof}

				We point out some corollaries to the main result. 
				
				\begin{cor} \label{cor:si>1}
					\begin{enumerate}
						\item If \(s_i \geq 2\) for all \(1 \leq i < q\), then \(\big\rvert \, [ \,d \, ]_q \big\rvert = 1. \) %In particular, \(\overline{\alpha}=\underline{\alpha}_q\). 
						\item If \(s_i \geq 2\) for all \(1 \leq i < q\) and \(s_q \geq 1\), then \(\big\rvert \, [ \,d \, ]\, \big\rvert= \prod_{i=q}^{L} (b_i+1). \)
						\item If \(m\) is a natural number then there exists partition  \(\alpha \in \mathcal{P}(n)\) for some \(n\) such that \(\delta(\alpha)=d\) and \(|\, [\,d\,]\,|=m\).
						\item  Let \(d=(1,2,\cdots,q,k^{(s_k)},(k-1)^{(s_{k-1})},\cdots,2^{(s_2)},1^{(s_1)})\) with \(k<q\) and \(s_k \geq 2\) and define \(d'=(1,2, \cdots,k,k^{(s_k)},(k-1)^{(s_{k-1})}, \cdots,2^{(s_2)},1^{(s_1)})\). Then \(\big\rvert \, [\, d \,] \, \big\rvert = \big\rvert \, [\, d' \,] \, \big\rvert\).
						\item  Let \(d=(1,2,\cdots,q,q^{(s_q)},(q-1)^{(s_{q-1})},\cdots,2^{(s_2)},1^{(s_1)})\). Let \(\sigma_i=\min \{s_i,2\}\) for \(1 \leq i \leq q\) and define \(d'=(1,2,\cdots,q,q^{(\sigma_q)},(q-1)^{(\sigma_{q-1})},\cdots,2^{(\sigma_2)},1^{(\sigma_1)})\). Then \(\big\rvert \, [\,d\,] \, \big\rvert = \big\rvert \, [\,d'\,] \, \big\rvert\).
					\end{enumerate}
				\end{cor}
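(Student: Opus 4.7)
The plan is to derive all five assertions directly from Theorem~\ref{theorem:anylength} and Proposition~\ref{prop:lengthq} by analyzing the shape of the $b$-sequence $(b_q, b_{q+1}, \ldots, b_L)$ under each hypothesis. The key observation is that the binomial coefficients in the two formulas take very simple values: $\binom{b_i+b_{i+1}}{b_i}=1$ iff $b_i b_{i+1}=0$, and $\binom{b_i+b_{i+1}+1}{b_i+1}=1$ iff $b_{i+1}=0$. Thus the whole proof reduces to locating where the nonzero entries of the $b$-sequence sit.

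For (1), the hypothesis $s_i\geq 2$ for $1\leq i<q$ means every descent of $d$ has size exactly $1$ and is followed by at least one repeat. Hence each $b_i$ with $q\leq i<L$ lies in $\{0,1\}$, and since $s_1\geq 2$ also forces $b_{L-1}=0$, no two consecutive $b_i$'s can both be positive. Every factor in the product of Proposition~\ref{prop:lengthq} therefore equals $1$. For (2), the extra assumption $s_q\geq 1$ gives $b_q=0$; a direct count shows exactly $q$ of the $b_i$'s (the $q-1$ descents plus $b_L=1$) equal $1$, and each contributes a factor $\binom{0+1+1}{0+1}=2$ in Theorem~\ref{theorem:anylength}, while every other factor is $1$. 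This yields $2^q$, which also equals $\prod_{i=q}^L(b_i+1)$.

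For (3), I exhibit the family $d=(1,2,\ldots,q,q)$ (that is, $s_q=1$ and $s_i=0$ otherwise), for which $b_q=0$ and $b_{q+1}=b_L=q$, so Theorem~\ref{theorem:anylength} gives $|[d]|=\binom{q+1}{1}=q+1$; varying $q\geq 1$ realizes every $m\geq 2$, and $d=(1)$ realizes $m=1$. For (4), the $b$-sequences of $d$ and $d'$ agree termwise after the index shift $i\mapsto i-(q-k)$, except that the leading entries are $b_q(d)=q-k$ versus $b'_k(d')=0$. Both are followed by a $0$ (since $s_k\geq 2$), so the leading factors are $\binom{(q-k)+0+1}{(q-k)+1}=1$ and $\binom{0+0+1}{0+1}=1$; hence $|[d]|=|[d']|$.

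For (5), I use a deletion argument: removing one copy of a value $j$ whose run has length $s_j\geq 3$ simply deletes a single interior $0$ from the $b$-sequence, which strips one factor $\binom{0+0+1}{0+1}=1$ from the product in Theorem~\ref{theorem:anylength}. Hence $|[d]|$ is unchanged, and iterating we may shrink each $s_j\geq 2$ down to $\sigma_j=2$ without affecting the cardinality, so $|[d]|=|[d']|$. The main technical care lies in part~(1), where one must carefully verify the ``no two consecutive nonzero $b_i$'s'' property at both boundaries (including the endpoint contribution $b_L=d_L=1$); the remaining parts follow by routine bookkeeping with the $b$-sequence.
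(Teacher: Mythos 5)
Your proof is correct. For parts (1), (2), (4), and (5) you follow essentially the same route as the paper: each hypothesis forces the nonzero entries of $(b_q,\dots,b_L)$ to be isolated, so every factor in Proposition~\ref{prop:lengthq} equals $1$ and each factor $\binom{b_i+b_{i+1}+1}{b_i+1}$ in Theorem~\ref{theorem:anylength} collapses to $1$ or to $b_{i+1}+1$; your index shift in (4) and your deletion of interior zeros in (5) are exactly the paper's observations, spelled out in more detail. The genuine difference is part (3). The paper deduces it ``from part (2) by factoring $m$,'' which is a little loose: under the literal hypotheses of part (2) every nonzero $b_i$ equals $1$, so $\prod_{i=q}^{L}(b_i+1)$ is always a power of $2$, and one must really invoke the isolation argument from part (2)'s \emph{proof} for sequences with larger drops (which have some $s_j=0$ and hence violate part (2)'s hypotheses). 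Your explicit family $d=(1,2,\dots,q,q)$, for which Theorem~\ref{theorem:anylength} gives $|[\,d\,]|=\binom{q+1}{1}=q+1$, together with $d=(1)$ for $m=1$, realizes every natural number directly and is self-contained, so it avoids that subtlety. The only microscopic gap is the degenerate case $q=1$ of part (1), where your appeal to $s_1\ge 2$ is vacuous; there the product is empty or equals $\binom{0+1}{0}=1$, so the conclusion still holds.
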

				
				\begin{proof}
					\begin{enumerate}
						\item Part 1 follows from the fact that if \(s_k \geq 2\) for all \(1 \leq k < q\), then then \(b_i=0\) or \(b_{i+1}=0\) for all \(q \leq i <L\). Hence \({b_i + b_{i+1} \choose b_i} =1\) for all \(q \leq i <L\).
						
						\item For the proof of part 2 assume\(s_k \geq 2\) for \(1 \leq k \leq q\) and \(s_q\geq 1\). This implies that if \(b_i \neq 0\), then \(b_{i-1}=b_{i+1}=0\). Hence \[{b_{i-1}+b_i+1 \choose b_{i-1}+1}{b_i+b_{i+1}+1 \choose b_i+1} = (b_i+1) \cdot 1 =b_i+1. \]
						
						\item Part 3 follows from part 2 by factoring \(m\). 
						
						\item Part 4 follows from the fact that \(s_k\geq 2\) implies \(b_{q+1}=0\) and hence \({b_q+b_{q+1} \choose b_q+1}=1\) for any choice of \(b_q\).
						
						\item Part 5 follows the fact that if \(s_i >2\), then \(b_j=b_{j+1}=0\) for some \(j\) and hence \({b_j+b_{j+1}+1 \choose b_j+1}=1\).  
					\end{enumerate}
				\end{proof}
				
				Given Equation \ref{eqn:anylength} we can characterize diagonal sequences \(d \in \Delta(n)\) and the sets \([ \, d \, ] \) for which \(\big\rvert \, [ \, d \, ] \, \big\rvert\) is small or a prime number.  
				
				\begin{cor}
					\begin{enumerate}
						\item If \(d \in \Delta(n)\) and \(\big\rvert \, [ \,d \, ]\, \big\rvert=1\), then there exists a positive integer \(q\) such that \(n={q+1 \choose 2}\), \(d=(1,2,\cdots,q-1,q)\) and \([ \, d \, ] =\{(q,q-1,\cdots,2,1)\}.\)
						\item If \(\big\rvert \, [ \,d \, ]\, \big\rvert=2\), then there exists a positive integers \(q\) and \(k \geq 2\) such that \(n={q+1 \choose 2}+k\), \(d=(1,2,\cdots,q,1^{(k)}) \) and  \([ \, d \, ] =\{ (q+k,q-1,\cdots,2,1),(q,q-1,\cdots,2,1,1^{(k)})\}.\)
						\item If \(\big\rvert \, [ \,d \, ]\, \big\rvert=3\), then 
						\begin{enumerate}
							\item there exist integers \(q \geq 2\), \(k \geq 2\) and \(d=(1,2,\cdots,q,2^k)\). In this case \( [ \, d \, ]  =\{ (q+k,q-1+k,q-2, \cdots, 2,1),\)\( (q,q-1,\cdots,2^{(k+1)},1), \)\((q+k,q-1,\cdots,2,1^{(k+1)}\}\); or
							\item \(d=(1,2,1)\) and \([\,d\,]=\{(3,1),(2,2),(2,1,1)\}\). 
						\end{enumerate}   
						\item If \(\big\rvert \, [ \,d \, ]\, \big\rvert=4\), then 
						\begin{enumerate}
							\item there exist integers \(q \geq 3\), \(k \geq 2\), \(d=(1,2,\cdots,q,3^{(k)})\) and \([\,d\,]=\{(q+k,q-1+k,q-2+k,q-3,\cdots,2,1),\)\((q+k,q-1+k,q-2,\cdots,2,1^{(k+1}),\)\((q+k,q-1,\cdots,3,2^{(k+1)},1),\)\((
							q.q-1,\cdots,4,3^{(k+1)},2,1)\}\);  or
							\item \([ \, d \, ] =(1,2,3,3) \) and \([\,d\,] = \{(4,3,2),(4,3,1,1),(4,2,2,1),\)\((3,3,2,1)\}\); or 
							\item there exist integers  \(q,k,l \geq 2\), \(d=(1,2,\cdots,q,2^{(k)},1^{(l)}) \) and 
							\([\,d\,]=\{(q+k+l,q+k,q-2,\cdots,2,1)\),
							\((q+k+l,q-1,\cdots 2,1^{(k+1)}),\)
							\((q+k,q-1,\cdots,2,1^{(k+l+1)}),\)
							\((q,\cdots,3,2^{(k+1)},1^{(l+1)})\}\); or
							\item there exist an integer \(k \geq 2\), \(d=(1,2,2,1^{(k)})\) and \([\,d\,]=\{ (k+3,2),\) \((k+3,1,1),\) \((3,1^{(k+2)}),\)\((2,2,1^{(k+1)})\}\).
						\end{enumerate}
					\end{enumerate}
				\end{cor}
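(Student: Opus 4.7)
The plan is to apply Theorem~\ref{theorem:anylength}, which writes
\[
|[\,d\,]| \;=\; \prod_{i=q}^{L-1} \binom{b_i+b_{i+1}+1}{b_i+1},
\]
and to analyze which choices of $(b_q,b_{q+1},\ldots,b_L)$ force this product to equal $1$, $2$, $3$, or $4$.

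First I would record two structural facts about the factors $F_i := \binom{b_i+b_{i+1}+1}{b_i+1}$: (i) $F_i=1$ iff $b_{i+1}=0$, and $F_i\geq 2$ otherwise; (ii) by the definition of $L$ we have $d_L\geq 1$, hence $b_L\geq 1$, so whenever $L>q$ the last factor $F_{L-1}$ is at least $2$. These two facts immediately handle part~(1): $|[\,d\,]|=1$ forces $L=q$, i.e.\ $d=(1,2,\ldots,q)$, and Proposition~\ref{prop:s} identifies $[\,d\,]$ as the singleton $\{(q,q-1,\ldots,1)\}$.

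For the prime cases $|[\,d\,]|=2$ and $|[\,d\,]|=3$, exactly one $F_i$ differs from $1$ (and equals that prime). By (ii) the non-unit factor must be $F_{L-1}$, and then $F_i=1$ for $q\leq i\leq L-2$ forces $b_{q+1}=\cdots=b_{L-1}=0$, i.e.\ $d_{q+1}=\cdots=d_L$. Solving $\binom{b_{L-1}+b_L+1}{b_{L-1}+1}=p$ for $p\in\{2,3\}$ over the nonnegative integers yields the families $d=(1,2,\ldots,q,1^{(k)})$ and $d=(1,2,\ldots,q,2^{(k)})$ respectively, while the boundary case $L=q+1$ (where (ii) degenerates because there is just one factor) produces the small exceptional sequences, most notably $d=(1,2,1)$ arising from $b_q=b_{q+1}=1$.

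For part~(4) I would split according to the multiplicative partitions of $4$: either $F_{L-1}=4$ and all other $F_i=1$, or exactly two of the $F_i$ equal $2$ and the rest are $1$. The single-factor branch is handled as above and yields $d=(1,2,\ldots,q,3^{(k)})$ together with the boundary $d=(1,2,3,3)$. For the two-factor branch, (ii) forces $F_{L-1}=2$, so $(b_{L-1},b_L)=(0,1)$; the other non-unit index $j<L-1$ must likewise satisfy $(b_j,b_{j+1})=(0,1)$, with $b_{i+1}=0$ elsewhere. Translating back to the $d_i$'s via $d_i-d_{i+1}=b_i$ produces the families $d=(1,2,\ldots,q,2^{(k)},1^{(l)})$ and the boundary $d=(1,2,2,1^{(k)})$.

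Once the shape of $d$ is pinned down in each subcase, the explicit listing of the partitions in $[\,d\,]$ asserted by the corollary is then read off from Proposition~\ref{prop:s}, Proposition~\ref{prop:alphaunderbar}, and the iterative construction of the maximal element $\overline\gamma\in[\,d\,]_k^*$ described in Section~\ref{section:majorization}, applied at each admissible $k\in A_1$. The main obstacle I expect is the bookkeeping in the short sequences where $L\in\{q,q+1\}$: here the product has zero or one factor and the clean argument "the unique non-unit factor lives at $L-1$" degenerates, which is precisely what produces the sporadic sequences $(1,2,1)$, $(1,2,2)$, $(1,2,3,3)$ and $(1,2,2,1^{(k)})$ that sit outside the generic families in the statement.
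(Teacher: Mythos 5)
Your proposal is correct and follows essentially the same route as the paper: both apply Theorem~\ref{theorem:anylength} and classify the ways the product $\prod_{i=q}^{L-1}\binom{b_i+b_{i+1}+1}{b_i+1}$ can equal $1,2,3,4$, using that a factor equals $1$ iff $b_{i+1}=0$ and that the factor at $i=L-1$ is at least $2$ whenever $L>q$. Your more careful treatment of the boundary $L\in\{q,q+1\}$ even surfaces $d=(1,2,2)$, with $[\,d\,]=\{(3,2),(3,1,1),(2,2,1)\}$, a case of $|[\,d\,]|=3$ that the corollary's restriction $k\geq 2$ in part 3(a) inadvertently omits.
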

				
				We leave it to the reader to generalize parts 2 and 3 and find all partitions \(\alpha\) and \(d=\delta(\alpha)\) such that \(\big\rvert \, [\,d\,] \, \big\rvert=p\), \(p\) prime. 
				
				\begin{proof}
					\begin{enumerate}
						\item If  \(\big\rvert \, [ \,d \, ]\, \big\rvert=1\), then \({b_i+b_{i+1}+1 \choose b_i+1}=1\) for \(q \leq i < L\), which implies \(b_i=0\) for \(q< i \leq L\) and \(b_q=q\). Hence \(d=(1,2,\cdots,q)\) and \([ \, d \, ] =\{(q,q-1,\cdots,2,1)\}.\) 
						\item If \(\big\rvert \, [ \,d \, ]\, \big\rvert=2\), then there exists a \(q \leq j < L\) with \({b_j+b_{j+1}+1 \choose b_j+1} = 2\) while \({b_i+b_{i+1}+1 \choose b_i+1} = 1\) for all \(q \leq i \neq j<L. \) Hence \(b_L=1\) while \(b_i=0\) for all \(q < i < L\), which implies \(d=(1,2,\cdots,q,1,1,\cdots, 1)\) and \(n={q+1 \choose 2} +k\) for some positive integer \(k\). It follows that \([ \, d \, ] =\{ (q+k,q-1,\cdots,2,1),(q,q-1,\cdots,2,1,1^{(k)}) \}\).  
						\item If \(\big\rvert \, [ \,d \, ]\, \big\rvert=3\), then \({b_{L-1}+b_L+1 \choose b_{L-1}+1}=3\) while \({b_{j}+b_{j+1}+1 \choose b_j+1}=1\) for all \(q \leq j < L-1\), which implies there exists integer \(q\geq 2\), \(k>1\) such that \(d=(1,2,\cdots,q,2^{(k)})\) or \(d=(1,2,1)\). The result follows.         
						\item If \(\big\rvert \, [ \,d \, ]\, \big\rvert = 4\), then \({b_{L-1}+b_{L}+1 \choose b_{L-1}+1}=4\), \(L \geq q+2\), while \({b_{i}+b_{i+1}+1 \choose b_{i}+1}=1\) for \(q<i<L-1\) or there exist integers \(k \geq q+2\), \(L \geq k=2\) with \({b_{k}+b_{k+1}+1 \choose b_{k}+1}=2\) and \({b_{L-1}+b_{L}+1 \choose b_{L-1}+1}=2\) while \({b_{i}+b_{i+1}+1 \choose b_{i}+1}=1\) for all \(i >q\), \(i \neq k,L-1\). The result follows.     
					\end{enumerate}
				\end{proof}

				\section{Examples} \label{sec:examples}
				We illustrate the results using our example \(\delta=(1,2,3,4,4,4,2,1)\). It follows that \(q=4\) and \(b_4=b_5=0,b_6=2,b_7=8=1\). By Proposition \ref{prop:alpha1}, if \(\alpha \in [ \, d \, ]  \), then \(\alpha \in [ \, d \, ] _4 \cup  [ \, d \, ] _6 \cup  [ \, d \, ] _7 \cup [ \, d \, ] _8. \) By Proposition \ref{prop:lengthq}, Proposition \ref{prop:lengthq+2} and Theorem \ref{theorem:anylength} we get 
				
				\begin{align*}
					\big\rvert \, [ \, d \, ] _4  \, \big\rvert & ={0+0 \choose 0}{0+2 \choose 0}{2+1 \choose 2}{1+1 \choose 1} = 1 \cdot 2 \cdot 3 \cdot 1 = 6,\\
					& \\
					\big\rvert \, [ \, d \, ] _6 \, \big\rvert & ={2 \choose 1}{1 \choose 1}{3 \choose 1}{2 \choose 1} = 2 \cdot 1 \cdot 3 \cdot 2 = 6,\\\
					& \\
					\big\rvert \, [ \, d \, ] _7  \, \big\rvert & ={3 \choose 3}{1 \choose 1}{3 \choose 1}{2 \choose 1} = 1 \cdot 1 \cdot 3 \cdot 2 = 6,\\\
					& \\
					\big\rvert \, [ \, d \, ] _8 \, \big\rvert & ={2 \choose 2}{1 \choose 1}{3 \choose 1}{4 \choose 3} = 1 \cdot 1 \cdot 3 \cdot 4 = 12.\\
				\end{align*} 
				and finally \[ \big\rvert \, [ \, d \, ] \, \big\rvert = {0+0+1 \choose 0+1}{0+2+1 \choose 0+1}{2+1+1 \choose 2+1}{1+1+1 \choose 1+1} = 1 \cdot 3 \cdot 4 \cdot 3 = 36.\]
				
				The 36 partition \(\alpha \in [ \, d \, ]  = [ \, d \, ] _4 \cup [ \, d \, ] _6 \cup [ \, d \, ] _7 \cup [ \, d \, ] _8 \) are listed below . 
				$$
				\begin{array}{ll|ll}
					4A & (8,6,4,3) &  7A & (8,5,4,1,1,1,1) \\ 
					4B & (8,5,5,3) &  7B & (8,5,2,2,2,1,1) \\ 
					4C & (8,5,4,4) &  7C & (8,3,3,3,2,1,1) \\ 
					4D & (7,7,4,3) &  7D & (6,5,2,2,2,2,2) \\ 
					4E & (6,6,6,3) &  7E & (6,3,3,3,2,2,2) \\ 
					4F & (6,5,5,5) &  7F & (4,4,4,3,2,2,2) \\ \hline
					6A & (8,6,4,1,1,1) &  8A & (7,5,4,1,1,1,1,1) \\ 
					6B & (8,6,2,2,2,1) &  8B & (7,5,2,2,2,1,1,1) \\ 
					6C & (8,5,5,1,1,1) &  8C & (7,3,3,3,2,1,1,1) \\ 
					6D & (8,5,2,2,2,2) &  8D & (6,6,4,1,1,1,1,1) \\ 
					6E & (8,3,3,3,3,1) &  8E & (6,6,2,2,2,1,1,1) \\ 
					6F & (8,3,3,3,2,2) &  8F & (6,5,5,1,1,1,1,1) \\ 
					6G & (7,7,4,1,1,1) &  8G & (6,5,2,2,2,2,1,1) \\ 
					6H & (7,7,2,2,2,1) &  8H & (6,3,3,3,3,1,1,1) \\ 
					6I & (6,6,6,1,1,1) &  8I & (6,3,3,3,2,2,1,1) \\ 
					6J & (6,3,3,3,3,3) &  8J & (4,4,4,4,2,1,1,1) \\ 
					6K & (4,4,4,4,4,1) &  8K & (4,4,4,3,3,1,1,1) \\ 
					6L & (4,4,4,3,3,3) &  8L & (4,4,4,3,2,2,1,1) \\ 
				\end{array}
				$$
				
				Alternatively, we can collect the elements of \([ \, d \, ] =[\,d\,]^* [ \, d \, ] _4^* \cup [ \, d \, ] _6^* \cup [ \, d \, ] _7^* \cup [ \, d \, ] _8^*\) by the the size of \(\alpha_1 \in A_1\). 
				$$
				\begin{array}{ll|ll}
					4A^* &  (4,4,4,3,2,2,1,1) &  7A^* &  (7,3,3,3,2,1,1,1) \\ 
					4B^* &  (4,4,4,3,3,1,1,1) &  7B^* &  (7,5,2,2,2,1,1,1) \\ 
					4C^* &  (4,4,4,4,2,1,1,1) &  7C^* &  (7,5,4,1,1,1,1,1) \\ 
					4D^* &  (4,4,4,3,2,2,2) &  7D^* & (7,7,2,2,2,1)  \\ 
					4E^* & (4,4,4,3,3,3) &  7E^* & (7,7,4,1,1,1)  \\ 
					4F^* & (4,4,4,4,4,1) &  7F^* & (7,7,4,3)  \\ \hline
					6A^* & (6,3,3,3,2,2,1,1)  &  8A^* &  (8,3,3,3,2,1,1) \\ 
					6B^* & (6,5,2,2,2,2,1,1) &  8B^* &  (8,5,2,2,2,1,1) \\ 
					6C^* & (6,3,3,3,3,1,1,1) &  8C^* &  (8,5,4,1,1,1,1) \\ 
					6D^* & (6,6,2,2,2,1,1,1) &  8D^* & (8,3,3,3,2,2)  \\ 
					6E^* & (6,5,5,1,1,1,1,1)  &  8E^* & (8,5,2,2,2,2)  \\ 
					6F^* & (6,6,4,1,1,1,1,1)  &  8F^* & (8,3,3,3,3,1)  \\ 
					6G^* & (6,3,3,3,2,2,2) &  8G^* & (8,6,2,2,2,1)  \\ 
					6H^* & (6,5,2,2,2,2,2) &  8H^* & (8,5,5,1,1,1)  \\ 
					6I^* & (6,3,3,3,3,3)  &  8I^* & (8,6,4,1,1,1)  \\ 
					6J^* & (6,6,6,1,1,1) &  8J^* & (8,5,4,4)  \\ 
					6K^* & (6,5,5,5) &  8K^* & (8,5,5,3) \\ 
					6L^* & (6,6,6,3)  &  8L^* & (8,6,4,3)  \\ 
				\end{array}
				$$
				
				Furthermore, \[\begin{array}{ll} 
					\overline{\alpha}(4) = \overline{\alpha}=(8,6,4,3) & \underline{\alpha}(4)=(6,5,5,5) \\
					\overline{\alpha}(6) = (8,6,4,1,1,1) & \underline{\alpha}(6)=(4,4,4,3,3,3) \\
					\overline{\alpha}(7) = (8,5,4,1,1,1,1) & \underline{\alpha}(7) =(4,4,4,3,2,2,2) \\
					\overline{\alpha}(8) = (7,5,4,1,1,1,1,1) & \underline{\alpha}(8)= \underline{\alpha} = (4,4,4,3,2,2,1,1)
				\end{array}\]

			\end{document}